\title{Uniform bounds, zero separation and monotonicity for the regular Coulomb wave functions}
\author{Seok-Young Chung\footnote{Seok-Young.Chung@ucf.edu. Department of Mathematics, University of Central Florida,
4393 Andromeda Loop N., Orlando, FL 32816, USA.}}
\date{}
\begin{document}

\maketitle

{\bf Abstract.} 
This paper begins by deriving the uniform bounds for the regular Coulomb wave function $F_{\ell,\eta}$ and its derivative $F_{\ell,\eta}'$. We then examine detailed zero configurations of $F_{\ell,\eta}$ and $F_{\ell+1,\eta}$, extending insights into the earlier work that was restricted to $\ell>-3/2$. Our investigation also includes an analysis of the monotonicity of the zeros of $F_{\ell,\eta}$ with respect to parameters $\ell$ and $\eta$, respectively. Furthermore, we expand our exploration to associated orthogonal polynomials, as well as the functions involving both $F_{\ell,\eta}$ and $F_{\ell,\eta}'$. Finally, we explore the breakdown of the Sturm separation theorem by means of the zeros of associated orthogonal polynomials.

\medskip

{\bf Keywords.} {Coulomb wave functions, interlacing property, orthogonal polynomials, Sturm separation theorem, uniform bound, zero configuration.}

\medskip

{\bf 2020 MSC.} 
{33C45, 30C15, 34C10}

\section{Introduction and historical results}

As is customary, we introduce the regular and irregular Coulomb wave functions, denoted as $F_{\ell,\eta}$ and $G_{\ell,\eta}$, respectively. These functions serve as linearly independent solutions of the second order differential equation
\begin{equation}\label{ode}
    \frac{d^2 y}{dx^2} + \rb{ 1 - \frac{2\eta}{x} - \frac{\ell\rb{\ell+1}}{x^2} } y = 0.
\end{equation}
where $\eta\in \R$ is known as the Sommerfeld parameter, and $\ell \in \mathbb{N}\cup\cb{0}$ represents the angular momentum quantum number. In this paper, we extend the domain of $\ell$ to the real numbers $\R$, allowing for a more comprehensive and continuous analysis. These Coulomb wave functions are widely used in quantum mechanics, nuclear and atomic physics, etc. We refer to \cite{Hey2019,Michel2008,Miyazaki-K-C-I2001,Nishiyama2008,PengStarace2006} and references therein.

In particular, we shall focus on the regular Coulomb wave function $F_{\ell,\eta}(x)$ which is of the form
\begin{equation}\label{structure}
    F_{\ell,\eta}(x) = C_{\ell,\eta} x^{\ell+1} \phi_{\ell,\eta}(x),
\end{equation}
where the normalizing (Gamow) constant $C_{\ell,\eta}$ and the function $\phi_{\ell,\eta}(x)$ are defined as
\begin{align}
    C_{\ell,\eta} &= \frac{2^\ell e^{-\pi \eta /2}\vb{\Gamma\rb{\ell+1+i\eta}} }{\Gamma\rb{2\ell+2}}, \label{Gamow} \\
    \phi_{\ell,\eta}(x) &= e^{-ix} {}_1F_1\rb{ \ell+1-i\eta,\, 2\ell+2;\, 2ix }.  \label{phi}
\end{align}
In the expressions above, ${}_1F_1(a, b; x)$ represents the confluent hypergeometric function, defined as
\begin{equation*}
    {}_1F_1\rb{a, b; x} = \sum_{k=0}^\infty \frac{\rb{a}_k}{\rb{b}_k} \frac{x^k}{k!},
\end{equation*}
where $(a)_k$ denotes the Pochhammer symbol defined as $(a)_k = \Gamma(a+k)/\Gamma(a)$ for $k\ge0$, and $\Gamma(x)$ denotes the gamma function.

It is of particular note that the function $F_{\ell,\eta}$ includes the first kind Bessel function as a special case, in accordance with the relation (see \cite[(14.6.6)]{AbramowitzStegun1972} or \cite[\S 33.5]{Olver-L-B-C2010})
\begin{equation}\label{Bessel}
    F_{\nu-1/2,0}(x) = \sqrt{\frac{\pi x}{2}} J_{\nu}(x),
\end{equation}
provided that $\nu\ge 1/2$. On the basis of \eqref{Gamow}, \eqref{phi} and \cite[(13.6.1)]{AbramowitzStegun1972}, we have that for $\nu \in \R \setminus \cb{-1/2,-3/2,-5/2,\cdots}$ (or $\ell\in \R \setminus\cb{-1,-2,\cdots}$),
\begin{equation*}
    F_{\nu-1/2,0}(x) = \frac{|\Gamma(\nu+1/2)|}{\Gamma(\nu+1/2)} \sqrt{\frac{\pi x}{2}} J_\nu (x) = (-1)^{ \lfloor \nu+1/2 \rfloor } \sqrt{\frac{\pi x}{2}} J_\nu (x)
\end{equation*}
In the present case, it is known that the zeros of $F_{\nu-1/2,0}$ have symmetry around the origin. Moreover, an interlacing pattern emerges for positive zeros, as detailed in \cite[\S 15.22]{Watson1922}:
\begin{equation*}
    \begin{split}
        0< j_{\nu,1} < j_{\nu+1,1} < j_{\nu,2} < j_{\nu+1,2} < \cdots \quad (\nu>-1) \\
        0< j_{\nu+1,1} < j_{\nu,1} < j_{\nu+1,2} < j_{\nu,2} < \cdots \quad (\nu\le -1)
    \end{split}
\end{equation*}
Additionally, further investigations can be found in \cite{ChoChung2021,Palmai2013}.

While there has been extensive research on the zeros and bounds of the Bessel function (specifically when $\eta=0$), much less is understood about that of the regular Coulomb wave function $F_{\ell,\eta}$ for $\eta\ne 0$. Here, we list the key contributions to this topic.

The result due to {\v S}tampach and {\v S}{\v t}ov{\' i}{\v c}ek \cite[Proposition 13]{StampachStovicek2014} provides the theorem including the zero separation properties for $\phi_{\ell,\eta}$ (equivalently $F_{\ell,\eta}$):
\begin{varthm}{A}\label{thm:A}
    Let $\eta\in \R$, $\ell>-3/2$ with the exception that $\ell\ne -1$ if $\eta\ne 0$. Then the following statements hold
    \begin{enumerate}[label={\rm(\roman*)}]
    \item The zeros of the function
    $\phi_{\ell,\eta}$ form a countable subset of
    $\mathbb{R}\setminus\{0\}$ and do not accumulate at any finite point.
    \item All zeros of $\phi_{\ell,\eta}$ are simple, Moreover, the
    functions $\phi_{\ell,\eta}$ and $\phi_{\ell+1,\eta}$ have
    no zeros in common, and the zeros of
    $\phi_{\ell,\eta}$ and $\phi_{\ell+1,\eta}$ of the same sign are interlaced.
\end{enumerate}
\end{varthm}

Miyazaki et al. \cite[Remark 4.3]{Miyazaki-K-C-I2001} established a fundamental result stating that for $x>0,$ $\eta\in\mathbb{R}$, and $\ell\in \mathbb{N}$, there exists one and only one zero of $F_{\ell,\eta}'(x)$ between two consecutive zeros of $F_{\ell,\eta}(x)$.

Furthermore, Baricz \cite[Theorem 5]{Baricz2015} extended this result with the following theorem:
\begin{varthm}{B}\label{thm:B}
Let $\eta,\ell\in\mathbb{R}$. Then the following statements hold
\begin{enumerate}[label={\rm(\roman*)}]
    \item If $\ell>-1/2$, then the zeros of $F_{\ell,\eta}(x)$ and $F_{\ell,\eta}'(x)$ are interlacing.
    \item If $\ell>-1$, then the zeros of $F_{\ell,\eta}(x)$ and $x F_{\ell,\eta}'(x) -(\ell+1)F_{\ell,\eta}(x)$ are interlacing. 
\end{enumerate}
\end{varthm}

Concerning the zeros of $F_{\ell,\eta}$, Baricz and {\v S}tampach \cite[Theorem 11]{BariczStampach2018} established a Hurwitz-type theorem that elucidates the number of nonreal zeros as $\ell$ varies:
\begin{varthm}{C}\label{thm:C}
Let $\eta,\ell\in\R$. Then the following statements hold
\begin{enumerate}[label={\rm(\roman*)}]
    \item If $\ell >-3/2$, $\ell \ne -1$ if $\eta \ne 0$ and $\ell>-3/2$ if $\eta=0$, then $F_{\ell, \eta}$ has only real zeros.
    \item If $\ell <-3/2 $ with $\ell \not\in -\mathbb{N}/2$ if $\eta \neq 0$ and $\ell \notin -\mathbb{N}-\frac{1}{2}$ if $\eta = 0$, then $F_{\ell, \eta}$ has exactly $\lfloor -\ell - \frac{1}{2} \rfloor$ conjugate pairs of nonreal zeros with an infinite number of real zeros.
\end{enumerate}
\end{varthm}

The purpose of section 2 is to establish the uniform bounds for $F_{\ell,\eta}$ and its derivative $F_{\ell,\eta}'$ on the oscillatory region.
Section 3 is devoted to exploring the detailed distributions of zeros for $F_{\ell,\eta}$ and $F_{\ell+1,\eta}$. This section will also include an extended analysis of Theorem \ref{thm:A}, specifically focusing on the case when $\ell\le -3/2$. In section 4, we establish the monotonicity of the zeros of $F_{\ell,\eta}$ in terms of the parameters $\ell$ and $\eta$, respectively. In section 5, we shall investigate some properties of a sequence of orthogonal polynomials, which converges to $F_{\ell,\eta}$. Furthermore we examine the reality and interlacing of zeros in various functions that involve both $F_{\ell,\eta}$ and $F_{\ell,\eta}'$, which is closely related to Theorem \ref{thm:B}. Finally, the last section demonstrates the breakdown of the Sturm separation theorem, which was discussed in section 3, in connection with the zeros of associated orthogonal polynomials. In addition, section 4 and 5 provide the answers to the open problems, proposed in \cite[Open problem 1-3]{Baricz-C-D-T2016}.

\section{Uniform bounds for \texorpdfstring{$F_{\ell,\eta}$}{F ell eta} and \texorpdfstring{$F'_{\ell,\eta}$}{d F ell eta}}

To the best of our knowledge, the uniform bounds for the regular Coulomb wave functions $F_{\ell,\eta}$ have not been addressed, though uniform bounds for the Bessel functions (the case when $\eta =0$) have been derived by Landau \cite{Landau2000} and Krasikov \cite{Krasikov2006}. In details,
\begin{itemize}
    \item[--] (Landau) If $\nu>0$ and $x\in \R$, we have
    \begin{equation*}
        |J_\nu(x)| < b \nu^{-1/3},\quad |J_\nu(x)| < c|x|^{-1/3},
    \end{equation*}
    where $b = 0.674885\ldots$ and $c = 0.785746\ldots$ are the best possible constants.

    \item[--] (Krasikov) If $\nu>-1/2$ and $x > \sqrt{ \mu+\mu^{2/3} }/2$, we have
    \begin{equation*}
        |J_\nu(x)|^2 \le \frac{4(4x^2-(2\nu+1)(2\nu+5))}{\pi( (4x^2-\nu)^{3/2}+\mu) },
    \end{equation*}
    where $\mu=(2\nu+1)(2\nu+3)$.
\end{itemize}

We recall the function $\phi_{\ell,\eta} = x^{-\ell-1} F_{\ell,\eta}(x)/C_{\ell,\eta}$, presented in \eqref{phi}, which solves the second order differential equation
\begin{equation}\label{ode2}
    \phi_{\ell,\eta}''(x) + \frac{2(\ell+1)}{x}\phi_{\ell,\eta}'(x) + \rb{ 1 - \frac{2\eta}{x} } \phi_{\ell,\eta}(x) = 0,
\end{equation}
and it can be expressed in the form (see \cite[\S 14.1]{AbramowitzStegun1972})
\begin{equation}\label{series1}
     \phi_{\ell,\eta}(x) = \sum_{k=0}^\infty a_{\ell,\eta,k} x^{k},
\end{equation}
where the coefficients $\cb{a_{\ell,k}}_{k\ge0}$ are given by
\begin{equation}\label{series2}
    \begin{cases}
        \displaystyle a_{\ell,\eta,0} =1 ,\quad  a_{\ell,\eta,1} = \frac{\eta}{\ell+1},\\
        \displaystyle k(k+2\ell+1) a_{\ell,\eta,k} = 2\eta a_{\ell,\eta,k-1} - a_{\ell,\eta,k-2} \for k=2,\,3,\,\cdots.
    \end{cases}
\end{equation}
An alternative integral representation for $\phi_{\ell,\eta}$ can be found in \cite[\S 5]{Froberg1955}.

We also note that $\phi_{\ell,\eta}$ is real entire function of order of growth $1$ (see \cite[p. 262]{BariczStampach2018}) and by Theorem \ref{thm:C}, it has only real zeros, denoted by $\cb{x_{\ell,\eta,n}}_{n=1}^\infty$ provided $\eta\in \R$, $\ell >-3/2$ and $\ell\ne -1$ if $\eta\ne 0$. Additionally, the function $\phi_{\ell,\eta}$ admits Hadamard expansion (see \cite[(76)]{StampachStovicek2014}), given by
\begin{equation}\label{Hadamard}
    \phi_{\ell,\eta}(x) = \exp\rb{ \frac{\eta x}{\ell+1} } \prod_{n=1}^\infty \rb{ 1-\frac{x}{x_{\ell,\eta,n}} }e^{x/x_{\ell,\eta,n}}.
\end{equation}
Hence $\phi_{\ell,\eta}$ belongs to the Laguerre-P{\'o}lya class $\mathcal{LP}$ when $\eta\in \R$ and $\ell >-3/2$ with the exception that $\ell\ne -1$ if $\eta\ne 0$. Accordingly, the Laguerre inequality 
\begin{equation}\label{Laguerre0}
    L\qb{\phi_{\ell,\eta}}(x) \equiv \rb{\phi_{\ell,\eta}'(x)}^2 - \phi_{\ell,\eta}(x) \phi_{\ell,\eta}''(x) \ge0
\end{equation}
holds true under the same condition for $\eta$ and $\ell$. We refer to \cite{CsordasEscassut2005} for further information about the Laguerre-P{\'o}lya class $\mathcal{LP}$ and its properties, such as Laguerre inequality. In particular, by using \eqref{ode2}, we write
\begin{equation}\label{Laguerre1}
    L\qb{\phi_{\ell,\eta}}(x) = \frac{x^2 - 2\eta x -(\ell+1)^2}{x^2} \rb{\phi_{\ell,\eta}(x)}^2
    + \rb{ \phi_{\ell,\eta}'(x)+ \frac{\ell+1}{x} \phi_{\ell,\eta}(x) }^2,
\end{equation}
which implies that for $|x-\eta| > \sqrt{(\ell+1)^2+\eta^2}$,
\begin{equation}\label{Laguerre2}
    \begin{aligned}
        &\rb{\phi_{\ell,\eta}(x)}^2 \le \frac{x^2}{x^2 - 2\eta x -(\ell+1)^2}\, L\qb{\phi_{\ell,\eta}}(x),\\
        &\rb{ \phi_{\ell,\eta}'(x)+ \frac{\ell+1}{x} \phi_{\ell,\eta}(x) }^2 \le  L\qb{\phi_{\ell,\eta}}(x).
    \end{aligned}
\end{equation}

We now establish the lower and upper bounds for the Laguerre expression $L\qb{\phi_{\ell,\eta}}$ for given $\eta$ and $\ell$.

\begin{lemma}\label{lem:2.1}
Let $\eta\in \R$, $\ell >-3/2$ with the exception that $\ell\ne -1$ if $\eta\ne 0$. Then we have
    \begin{equation*}
        \frac{x-\eta -\sqrt{(\ell+1)^2+\eta^2}}{C_{\ell,\eta}^2 x^{2\ell+3}} < L\qb{\phi_{\ell,\eta}}(x) < \frac{x-\eta +\sqrt{(\ell+1)^2+\eta^2}}{C_{\ell,\eta}^2 x^{2\ell+3}},
    \end{equation*}
    for $x>0$.
\end{lemma}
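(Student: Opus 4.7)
The strategy is to translate the bound on $L[\phi_{\ell,\eta}]$ into a bound on a bilinear form in $F_{\ell,\eta}$ and $F_{\ell,\eta}'$, establish monotonicity in $x$, and then exploit an exact Pythagorean-type identity. Multiplying \eqref{Laguerre1} by $C_{\ell,\eta}^2 x^{2\ell+3}$ and using $F_{\ell,\eta}=C_{\ell,\eta}x^{\ell+1}\phi_{\ell,\eta}$ together with $F_{\ell,\eta}'-\frac{\ell+1}{x}F_{\ell,\eta}=C_{\ell,\eta}x^{\ell+1}\phi_{\ell,\eta}'$, I would introduce
\begin{equation*}
G(x) := C_{\ell,\eta}^2\, x^{2\ell+3}\, L[\phi_{\ell,\eta}](x) = x\,F_{\ell,\eta}'(x)^2 + \left(x - 2\eta - \frac{(\ell+1)^2}{x}\right) F_{\ell,\eta}(x)^2,
\end{equation*}
so that the claim reads $|G(x)-(x-\eta)|<\sqrt{(\ell+1)^2+\eta^2}$. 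Differentiating $G$ and substituting \eqref{ode} for $F_{\ell,\eta}''$ gives, after cancellation, the clean formula
\begin{equation*}
G'(x) = \left(F_{\ell,\eta}'(x)-\tfrac{\ell+1}{x}F_{\ell,\eta}(x)\right)^2 + F_{\ell,\eta}(x)^2 \ge 0,
\end{equation*}
showing $G$ is nondecreasing on $(0,\infty)$. Combining this with the series \eqref{series1}--\eqref{series2} and the assumption $\ell>-3/2$ yields the boundary value $G(0^+)=0$; in particular, $H(x):=G(x)-(x-\eta)$ starts at $H(0^+)=\eta$, which lies strictly inside the target interval because $\ell\ne -1$ whenever $\eta\ne 0$.

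Next, write $u:=F_{\ell,\eta}$ and $v:=F_{\ell,\eta}'-(\ell+1)F_{\ell,\eta}/x = C_{\ell,\eta}x^{\ell+1}\phi_{\ell,\eta}'$, so that $G'=u^2+v^2=:E$. Collecting terms yields the compact form $G(x)=x(u^2+v^2)+2(\ell+1)uv-2\eta u^2$, which motivates the companion expression $\widetilde G(x):=(\ell+1)(u^2-v^2)+2\eta uv$. Using the Pythagorean identity $(u^2-v^2)^2+(2uv)^2=(u^2+v^2)^2$, a direct expansion gives the exact algebraic identity
\begin{equation*}
\bigl(G(x)-(x-\eta)E(x)\bigr)^2 + \widetilde G(x)^2 \;=\; \bigl[(\ell+1)^2+\eta^2\bigr]\,E(x)^2,
\end{equation*}
which immediately yields $|G(x)-(x-\eta)E(x)| \le \sqrt{(\ell+1)^2+\eta^2}\,E(x)$. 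Differentiating $E$ via the $2\times 2$ linear system for $(u,v)$ derived from \eqref{ode2} produces the companion evolution $E'(x)=(2/x)\widetilde G(x)$, so the pair $(E,\widetilde G)$ satisfies a tightly coupled dynamics.

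The main obstacle is to convert the exact bound on $|G-(x-\eta)E|$ into the desired bound on $|G-(x-\eta)|$, since these differ by $(x-\eta)(E-1)=(x-\eta)(G'-1)$, and $E$ need not equal $1$. I would close the argument by a continuity / maximum-principle approach: the initial condition $|H(0^+)|=|\eta|<\sqrt{(\ell+1)^2+\eta^2}$ gives a strict bound at the left endpoint, and at any would-be saturation point $x_0$ with $H(x_0)^2=(\ell+1)^2+\eta^2$, the Pythagorean identity above forces $E(x_0)=1$ and $\widetilde G(x_0)=0$ simultaneously. The companion identity $E'=(2/x)\widetilde G$ then gives $H'(x_0)=H''(x_0)=0$, and computing $\widetilde G'(x_0)$ explicitly shows that $H'''(x_0)\ne 0$, producing a transversal crossing that is incompatible with $H(x_0)$ being the first saturation point. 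The delicate part is to organise this contradiction cleanly; an alternative is to work directly with the equivalent quadratic-in-$G$ inequality $G(x)^2-2(x-\eta)G(x)+(x^2-2\eta x-(\ell+1)^2)<0$ and combine it with the Laguerre inequality \eqref{Laguerre0} on the oscillatory region $|x-\eta|>\sqrt{(\ell+1)^2+\eta^2}$ while handling the complementary region by the series-based boundary analysis.
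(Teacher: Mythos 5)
Your computation of $G$, the derivative identity $G'=\bigl(F_{\ell,\eta}'-\tfrac{\ell+1}{x}F_{\ell,\eta}\bigr)^2+F_{\ell,\eta}^2=E$, and the Pythagorean identity $\bigl(G-(x-\eta)E\bigr)^2+\widetilde G^2=\bigl[(\ell+1)^2+\eta^2\bigr]E^2$ are all correct; in fact the resulting inequality $g^{+}E-G\ge 0$ (with $g^{\pm}(x)=x-\eta\pm\sqrt{(\ell+1)^2+\eta^2}$) is, up to the factor $C_{\ell,\eta}^2x^{2\ell+2}$, exactly the paper's statement that the Wronskian $W[g^{+},f]$ with $f=x^{2\ell+3}L[\phi_{\ell,\eta}]$ is a perfect square, i.e.\ that the \emph{ratio} $f/g^{+}$ is increasing (and likewise $f/g^{-}$ decreasing where $g^{-}>0$). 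But the step you flag as ``delicate'' is a genuine gap, and your proposed fix does not work. At a first saturation point $x_0$ with $H(x_0)=G(x_0)-(x_0-\eta)=\sqrt{(\ell+1)^2+\eta^2}$, the Pythagorean identity does \emph{not} force $E(x_0)=1$ and $\widetilde G(x_0)=0$: writing $S=\sqrt{(\ell+1)^2+\eta^2}$ one has $G(x_0)-(x_0-\eta)E(x_0)=S-(x_0-\eta)\bigl(E(x_0)-1\bigr)$, and the bound $|G-(x-\eta)E|\le SE$ reduces at $x_0$ to $(1-E(x_0))\,g^{+}(x_0)\le 0$, i.e.\ merely $E(x_0)\ge 1$ --- which is already what $H'(x_0)=E(x_0)-1\ge 0$ tells you at a first upward crossing. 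No contradiction is produced, so the maximum-principle argument collapses. The alternative you sketch (Laguerre inequality on the oscillatory region) only gives $G\ge 0$, which yields the lower bound where $g^{-}\le 0$ but says nothing about the upper bound.

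The missing ingredient is control at $x=+\infty$ rather than at $x=0^{+}$. Monotonicity of $G$ together with $G(0^{+})=0$ can never produce an \emph{upper} bound on $G$; what the paper does is divide by $g^{+}$, use your inequality $g^{+}E-G\ge0$ to conclude that $f/g^{+}$ is increasing on $(0,\infty)$, and then anchor the argument with the asymptotics $F_{\ell,\eta}(x)=\sin\bigl(\theta_{\ell,\eta}(x)\bigr)+o(1)$, which give $\lim_{x\to\infty}f(x)/g^{+}(x)=1/C_{\ell,\eta}^2$ and hence $f<g^{+}/C_{\ell,\eta}^2$ everywhere; the lower bound is obtained symmetrically from the decrease of $f/g^{-}$ on $x>\eta+S$ (and trivially from $L[\phi_{\ell,\eta}]\ge0$ when $g^{-}\le0$). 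You are one observation away from this proof --- pass from monotonicity of $G$ to monotonicity of the ratios $G/g^{\pm}$ and bring in the large-$x$ asymptotics of $F_{\ell,\eta}$ --- but as written the argument does not close.
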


\begin{proof}
Let us introduce auxiliary functions
\begin{equation*}
    f(x) =x^{2\ell+3}L\qb{\phi_{\ell,\eta}}(x), \quad g^{\pm}(x) = x-\eta \pm\sqrt{(\ell+1)^2+\eta^2}.
\end{equation*}
By eliminating the higher-order derivatives $\phi_{\ell,\eta}^{(n)}$, $n\ge 2$ with the aid of \eqref{ode2}, we observe that
\begin{align*}
&x^{-2\ell-2}W[g^+,f](x) \\
&= \rb{\sqrt{(\ell+1)^2+\eta^2}+\eta} \big(\phi_{\ell,\eta}(x)\big)^2\\
&\pushright{-2(\ell+1) \phi_{\ell,\eta}(x)\phi_{\ell,\eta}'(x)  +\rb{\sqrt{(\ell+1)^2+\eta^2}-\eta} \big(\phi_{\ell,\eta}'(x)\big)^2}\\
& =  \qb{ \rb{\sqrt{(\ell+1)^2+\eta^2}+\eta}^{1/2}\phi_{\ell,\eta}(x) - \rb{\sqrt{(\ell+1)^2+\eta^2}-\eta}^{1/2}\phi_{\ell,\eta}'(x) }^2,
\end{align*}
where $W[u,v](x) = u(x) v'(x) - u'(x) v(x)$ denotes the Wronskian.
Thereby, $f(x)/g^+(x)$ is increasing on $(0,\infty)$, as shown by
\begin{equation}\label{lem1}
    \frac{d}{dx} \frac{f(x)}{g^+(x)} =  \frac{W[g^+,f](x)}{\rb{x-\eta +\sqrt{(\ell+1)^2+\eta^2}}^2} \ge0.
\end{equation}
Owing to the asymptotic behavior (see for instance \cite[\S 33.10]{Olver-L-B-C2010})
\begin{align*}
    F_{\ell,\eta}(x) &= \sin\rb{ \theta_{\ell,\eta}(x) } +o(1),\quad \text{as } x\to \infty,
\end{align*}
where $\theta_{\ell,\eta}(x) = x-\eta \ln\rb{2x} -\frac{\ell}{2}\pi +\sigma_{\ell,\eta}$ and $\sigma_{\ell,\eta}$ denotes Coulomb phase shift defined as $\arg\rb{\Gamma\rb{\ell+1+i\eta}}$, it is not difficult to show that
\begin{equation}\label{lem2}
    \lim_{x\to \infty} \frac{f(x)}{g^+(x)} = \lim_{x\to \infty} \frac{x^{2\ell+3}L(\phi_{\ell,\eta})}{x-\eta +\sqrt{(\ell+1)^2+\eta^2}} =\frac{1}{C_{\ell,\eta}^2}.
\end{equation}
Hence on combining \eqref{lem1} and \eqref{lem2}, the right inequality has been established.

In a similar manner, we find that
\begin{multline*}
    x^{-2\ell-2}W[g^-,f](x) \\
    =-\qb{ \rb{\sqrt{(\ell+1)^2+\eta^2}-\eta}^{1/2}\phi_{\ell,\eta}(x) + \rb{\sqrt{(\ell+1)^2+\eta^2}+\eta}^{1/2}\phi_{\ell,\eta}'(x)  }^2.
\end{multline*}
Therefore, the left inequality follows since the function $f(x)/g^-(x)$
decreases monotonically for $x>\eta + \sqrt{(\ell+1)^2+\eta^2}$ and approaches to $1/C_{\ell,\eta}^2$ as $x$ tends to infinity. If $0<x\le \eta + \sqrt{(\ell+1)^2+\eta^2}$, the result is trivial, using \eqref{Laguerre0}.
\end{proof}

It is worth to note that Miyazaki \cite[Remark 4.1]{Miyazaki-K-C-I2001} proved that the region of positive zeros of $F_{\ell,\eta}(x)$ and $F_{\ell,\eta}'(x)$ is bounded by the inequality
\begin{equation}\label{bound}
    x > \eta + \sqrt{\eta^2 +(\ell+1)^2}
\end{equation}
for $\eta\in \R$ and $\ell=0,1,2,\cdots$. The range of parameter $\ell$ in this result can be extended as follows:

\begin{lemma}\label{lem:2.2}
Let $\eta\in \R$, $\ell >-3/2$ with the exception that $\ell\ne -1$ if $\eta\ne 0$.
The positive zeros of $F_{\ell,\eta}'$ are bounded below by $\eta + \sqrt{\eta^2+(\ell+1)^2}>0$.
\end{lemma}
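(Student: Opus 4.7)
The plan is to pipeline the identity \eqref{Laguerre1} through the zero condition for $F_{\ell,\eta}'$ and then combine the Laguerre inequality \eqref{Laguerre0} with the strict lower bound already supplied by Lemma \ref{lem:2.1}. First I would differentiate the factorization \eqref{structure} to obtain
\begin{equation*}
F_{\ell,\eta}'(x) = C_{\ell,\eta}\,x^{\ell+1}\left(\phi_{\ell,\eta}'(x) + \frac{\ell+1}{x}\,\phi_{\ell,\eta}(x)\right),
\end{equation*}
so that a positive zero $x_0$ of $F_{\ell,\eta}'$ is precisely a point at which the bracketed factor vanishes. Substituting $\phi_{\ell,\eta}'(x_0)+\frac{\ell+1}{x_0}\phi_{\ell,\eta}(x_0)=0$ into \eqref{Laguerre1} annihilates the square term, leaving
\begin{equation*}
L[\phi_{\ell,\eta}](x_0) \;=\; \frac{x_0^2 - 2\eta x_0 - (\ell+1)^2}{x_0^2}\,\phi_{\ell,\eta}(x_0)^2.
\end{equation*}

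Next I would observe that $\phi_{\ell,\eta}(x_0)\neq 0$: otherwise the vanishing of $\phi_{\ell,\eta}'(x_0)+\frac{\ell+1}{x_0}\phi_{\ell,\eta}(x_0)$ would also force $\phi_{\ell,\eta}'(x_0)=0$, and uniqueness for the regular initial value problem \eqref{ode2} at $x_0>0$ would yield $\phi_{\ell,\eta}\equiv 0$, contradicting $\phi_{\ell,\eta}(0)=a_{\ell,\eta,0}=1$ from \eqref{series2}. Since $\phi_{\ell,\eta}\in\mathcal{LP}$ under the hypotheses, the Laguerre inequality \eqref{Laguerre0} gives $L[\phi_{\ell,\eta}](x_0)\ge 0$, whence $x_0^2-2\eta x_0-(\ell+1)^2\ge 0$, and the positivity of $x_0$ forces $x_0\ge \eta+\sqrt{\eta^2+(\ell+1)^2}$.

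Finally, to upgrade this to a strict inequality, I would invoke the strict left bound in Lemma \ref{lem:2.1}. Specifically, at the candidate value $x_0=\eta+\sqrt{\eta^2+(\ell+1)^2}$, Lemma \ref{lem:2.1} yields $L[\phi_{\ell,\eta}](x_0)>0$, whereas the displayed identity above forces $L[\phi_{\ell,\eta}](x_0)=0$, a contradiction. Hence every positive zero of $F_{\ell,\eta}'$ must exceed $\eta+\sqrt{\eta^2+(\ell+1)^2}$, which is positive since our exclusions on $(\ell,\eta)$ preclude the degenerate case $\ell=-1,\eta\le 0$. I do not anticipate any serious obstacle: all heavy tools—the Laguerre identity \eqref{Laguerre1} and the two-sided bounds of Lemma \ref{lem:2.1}—are already in hand, and the only delicate step is the uniqueness argument ruling out $\phi_{\ell,\eta}(x_0)=0$, which is immediate from the regularity of \eqref{ode2} at $x_0>0$.
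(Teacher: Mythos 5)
Your proof is correct and follows essentially the same route as the paper: substitute the zero condition $\phi_{\ell,\eta}'(x_0)+\tfrac{\ell+1}{x_0}\phi_{\ell,\eta}(x_0)=0$ into \eqref{Laguerre1} to annihilate the square term, show $\phi_{\ell,\eta}(x_0)\ne 0$, and conclude $x_0^2-2\eta x_0-(\ell+1)^2>0$. The only cosmetic differences are in the two auxiliary steps: the paper rules out $\phi_{\ell,\eta}(x_0)=0$ via the simplicity of zeros from Theorem \ref{thm:A}(ii) and obtains strictness from the Csordas--Escassut characterization of equality in the Laguerre inequality, whereas you use uniqueness for the regular initial value problem and the strict lower bound of Lemma \ref{lem:2.1} evaluated at the boundary point --- both equally valid.
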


\begin{proof}
Let $\beta$ be an arbitrary positive zero of $F_{\ell,\eta}'$, equivalently $\phi'_{\ell,\eta}(\beta) + (\ell+1)\phi_{\ell,\eta}(\beta)/\beta =0$. From \eqref{Laguerre0} and \eqref{Laguerre1}, we deduce that
\begin{equation*}
    \frac{\beta^2 - 2\eta \beta -(\ell+1)^2}{\beta^2} \rb{\phi_{\ell,\eta}(\beta)}^2 \ge 0.
\end{equation*}
Considering \cite[Theorem 2.1]{CsordasEscassut2005}, the equality holds only when $\phi_{\ell,\eta}$ has a multiple zero. On the other hand, by (ii) of Theorem \ref{thm:A}, it follows that $\phi_{\ell,\eta}$ has only simple zeros and further that $\phi_{\ell,\eta}(\beta)\ne 0$. Hence $\beta_2 -2\eta \beta - (\ell+1)^2 >0$, leading to the desired result
\begin{equation*}
    \beta > \eta + \sqrt{\eta^2 + (\ell+1)^2}.
\end{equation*}
\end{proof}
\noindent
We note that a similar result can be deduced for $F_{\ell,\eta}$, which will be discussed in section 5.

Combining \eqref{Laguerre2}, Lemma \ref{lem:2.1} and \ref{lem:2.2}, the result for uniform bounds can be established as follows:
\begin{theorem}\label{thm:2.1}
Let $\eta\in \R$, $\ell >-3/2$ with the exception that $\ell\ne -1$ if $\eta\ne 0$.Then the following inequalities hold
\begin{align*}
    \vb{F_{\ell,\eta}(x)}^2 &< \frac{x}{x - \eta  -\sqrt{(\ell+1)^2+\eta^2}},\\
    \vb{F_{\ell,\eta}'(x)}^2 &< 1+\frac{\sqrt{(\ell+1)^2+\eta^2}-\eta}{x},
\end{align*}
for $x > \eta + \sqrt{(\ell+1)^2+\eta^2}>0$. In particular, we have
\begin{align*}
    \vb{F_{\ell,\eta}(\beta)}^2 &> \frac{\beta}{\beta - \eta  +\sqrt{(\ell+1)^2+\eta^2}},\\
    \vb{F_{\ell,\eta}'(\alpha)}^2 &> 1-\frac{\sqrt{(\ell+1)^2+\eta^2}+\eta}{\alpha},
\end{align*}
where $\alpha,\beta$ denote any positive zeros of $F_{\ell,\eta},F'_{\ell,\eta}$, respectively.
\end{theorem}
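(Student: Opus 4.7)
The plan is to view Theorem \ref{thm:2.1} as a direct corollary of the machinery already assembled: Lemma \ref{lem:2.1} (two-sided bound on $L[\phi_{\ell,\eta}]$), equation \eqref{Laguerre2} (which extracts bounds on $\phi_{\ell,\eta}$ and on the combination $\phi_{\ell,\eta}'+(\ell+1)\phi_{\ell,\eta}/x$ from $L[\phi_{\ell,\eta}]$), and Lemma \ref{lem:2.2} (which places positive zeros of $F_{\ell,\eta}'$ in the region where \eqref{Laguerre2} applies). The bookkeeping device is the factorization $x^2-2\eta x-(\ell+1)^2=(x-\eta-\sqrt{\eta^2+(\ell+1)^2})(x-\eta+\sqrt{\eta^2+(\ell+1)^2})$ together with the two identities $F_{\ell,\eta}(x)=C_{\ell,\eta}x^{\ell+1}\phi_{\ell,\eta}(x)$ and $F_{\ell,\eta}'(x)=C_{\ell,\eta}x^{\ell+1}\bigl(\phi_{\ell,\eta}'(x)+\tfrac{\ell+1}{x}\phi_{\ell,\eta}(x)\bigr)$.

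For the two upper bounds, I would start from \eqref{Laguerre2}, which is valid precisely for $x>\eta+\sqrt{(\ell+1)^2+\eta^2}$, and substitute the upper estimate from Lemma \ref{lem:2.1}. Multiplying through by $C_{\ell,\eta}^2 x^{2\ell+2}$ converts $\phi_{\ell,\eta}$ into $F_{\ell,\eta}$ and $\phi_{\ell,\eta}'+(\ell+1)\phi_{\ell,\eta}/x$ into $F_{\ell,\eta}'$. The $F_{\ell,\eta}$ bound telescopes via the factorization above, and the $F_{\ell,\eta}'$ bound simplifies directly to $1+(\sqrt{(\ell+1)^2+\eta^2}-\eta)/x$.

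For the reverse inequality at a positive zero $\beta$ of $F_{\ell,\eta}'$, the vanishing condition is $\phi_{\ell,\eta}'(\beta)+(\ell+1)\phi_{\ell,\eta}(\beta)/\beta=0$, so that \eqref{Laguerre1} collapses to $L[\phi_{\ell,\eta}](\beta)=\bigl(\beta^2-2\eta\beta-(\ell+1)^2\bigr)\phi_{\ell,\eta}(\beta)^2/\beta^2$. Lemma \ref{lem:2.2} guarantees $\beta^2-2\eta\beta-(\ell+1)^2>0$, so I can solve for $\phi_{\ell,\eta}(\beta)^2$, insert the lower bound from Lemma \ref{lem:2.1}, and use the factorization to cancel the $(\beta-\eta-\sqrt{\eta^2+(\ell+1)^2})$ factor; multiplying by $C_{\ell,\eta}^2\beta^{2\ell+2}$ yields the stated lower bound on $|F_{\ell,\eta}(\beta)|^2$. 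For a positive zero $\alpha$ of $F_{\ell,\eta}$, the vanishing condition reduces \eqref{Laguerre1} even more cleanly to $L[\phi_{\ell,\eta}](\alpha)=(\phi_{\ell,\eta}'(\alpha))^2$, so that $F_{\ell,\eta}'(\alpha)^2=C_{\ell,\eta}^2\alpha^{2\ell+2}L[\phi_{\ell,\eta}](\alpha)$, and the lower half of Lemma \ref{lem:2.1} gives the claimed bound directly (which is only nontrivial when $\alpha>\eta+\sqrt{(\ell+1)^2+\eta^2}$, but this is automatic for positive zeros of $F_{\ell,\eta}$, as foreshadowed in the remark after Lemma \ref{lem:2.2}).

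There is no real obstacle here; the whole argument is a careful substitution. The only point that requires a sentence of justification is using Lemma \ref{lem:2.2} to legitimize dividing by $\beta^2-2\eta\beta-(\ell+1)^2$ when handling the $F_{\ell,\eta}(\beta)$ estimate — everything else is algebraic assembly of the already-proved ingredients.
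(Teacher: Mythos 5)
Your proposal is correct and follows essentially the same route as the paper: the upper bounds come from combining \eqref{Laguerre2} with the upper estimate of Lemma \ref{lem:2.1}, and the lower bounds at the zeros $\alpha$, $\beta$ come from the collapse of \eqref{Laguerre1} together with Lemma \ref{lem:2.2} and the lower estimate of Lemma \ref{lem:2.1}. The only differences are cosmetic bookkeeping (and your observation that the $\alpha$-bound is automatic from the validity of Lemma \ref{lem:2.1} on all of $(0,\infty)$), so nothing further is needed.
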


\begin{proof}
Regarding \eqref{structure} and \eqref{Laguerre1}, we deduce that for $x > \eta + \sqrt{(\ell+1)^2+\eta^2}$,
\begin{align*}
    |F_{\ell,\eta}(x)|^2 &\le \frac{C_{\ell,\eta}^2\, x^{2\ell+4}}{x^2 - 2\eta x -(\ell+1)^2}\, L\qb{\phi_{\ell,\eta}}(x),\\
    |F_{\ell,\eta}'(x)|^2 &\le C_{\ell,\eta}^2\, x^{2\ell+2} L\qb{\phi_{\ell,\eta}}(x).
\end{align*}
The proof is straightforward by applying Lemma \ref{lem:2.1}.

To verify the reversed inequality, we let $\beta$ be any positive zero of $F_{\ell,\eta}'$. Since $F'_{\ell,\eta}(x) = x^{\ell+1}(\phi_{\ell,\eta}'(x) +(\ell+1) \phi_{\ell,\eta}(x)/x)$, we obtain
\begin{equation*}
    |F_{\ell,\eta}(\beta)|^2 = \frac{C_{\ell,\eta}^2\, \beta^{2\ell+4}}{\beta^2 - 2\eta \beta-(\ell+1)^2}\, L\qb{\phi_{\ell,\eta}}(\beta).
\end{equation*}
We note that Lemma \ref{lem:2.2} implies that $\beta^2 - 2\eta \beta - (\ell+1)^2 >0$.
Hence the left inequality in Lemma \ref{lem:2.1} gives the lower bound for $|F_{\ell,\eta}(\beta)|^2$. Similarly, we have that for any positive zero $\alpha$ of $F_{\ell,\eta}$,
\begin{equation*}
    |F_{\ell,\eta}'(\alpha)|^2 = C_{\ell,\eta}^2\, \alpha^{2\ell+4} L\qb{\phi_{\ell,\eta}}(\alpha).
\end{equation*}
The same argument can be applied to establish the lower bound for $|F_{\ell,\eta}'(\alpha)|^2$.
\end{proof}

\begin{remark}\
\begin{enumerate}[label=(\roman*)]
    \item The uniform bound can also be obtained in the exceptional case where $\ell=-1$ and $\eta\ne 0$, despite Theorem \ref{thm:2.1} omits this case because $F_{-1,\eta}$ fails to be in $\mathcal{LP}$.
    To be precise, in consideration of \cite[6.7.1 (12)]{Erdelyi-M-O-T1953} and \eqref{structure}-\eqref{phi}, it follows that for $\eta\ne0$, 
    \begin{equation*}
        F_{-1,\eta}(x) = \lim_{\ell\to-1} C_{\ell,\eta} x^{\ell+1} \phi_{\ell,\eta}(x) = -\frac{i\eta x}{2|\eta|} F_{0,\eta}(x),
    \end{equation*}
    which leads that for $x> \eta+ \sqrt{1+\eta^2}$ and $\eta\ne0$,
    \begin{equation*}
        \vb{F_{-1,\eta}(x)}^2 < \frac{x^3}{4(x - \eta  -\sqrt{1+\eta^2})}.
    \end{equation*}

    \item When $\eta=0$, Theorem \ref{thm:2.1}, along with \eqref{Bessel}, yields that
    \begin{equation*}
        |J_\nu(x)|^2 \le \frac{2}{\pi (x-|\nu-1/2|)}, 
    \end{equation*}
    for $\nu > -1$ and $x>|\nu-1/2|$.

    \item In order to address the bound on the negative real line, we recall reflection formula \cite[(23)]{Gaspard2018}
    \begin{equation*}
        F_{\ell,\eta}(-x) = - e^{\pi (-\eta + i\ell)} F_{-\ell,\eta}(x)
    \end{equation*}
    for $x>0$. Thus we have that for $x> \eta + \sqrt{ \eta^2 +(\ell+1)}$,
    \begin{equation*}
        \vb{F_{\ell,\eta}(-x)}^2 < \frac{x e^{-\pi \eta} }{x + \eta  -\sqrt{(\ell+1)^2+\eta^2}}.
    \end{equation*}

\end{enumerate}
\end{remark}

\section{Sturm separation theorem with \texorpdfstring{$F_{\ell,\eta}$}{phi ell eta} and \texorpdfstring{$F_{\ell+1,\eta}$}{phi ell+1 eta}}

As discussed in the preceding section, the case of $\eta=0$ corresponds to the Bessel function, extensively studied in literature. Throughout this paper, our attention is directed toward the scenario where $\eta\ne0$.

While the function $\phi_{\ell,\eta}(x)$ is entire in both $x$ and $\eta$, it has poles and becomes undefined as a function of $\ell$ whenever $2\ell+2$ coincides with non-negative integers.
To address this, we consider the \emph{modified} regular Coulomb wave functions $\varphi_{\ell,\eta}(x)$, defined as
\begin{equation*}
\varphi_{\ell,\eta}(x) = \frac{1}{\Gamma(2\ell+2)} \phi_{\ell,\eta}(x),
\end{equation*}
which admits the limiting case (see for instance \cite[\S 6.7]{Erdelyi-M-O-T1953} and \cite[p. 6]{Gaspard2018})
\begin{equation}\label{limit1}
    \lim_{\ell \to -(n+1)/2} \varphi_{\ell,\eta}(x) = \frac{ \Gamma((n+1)/2-i\eta) }{ \Gamma((-n+1)/2-i\eta) } x^n \varphi_{(n-1)/2,\eta}(x)
\end{equation}
for each $n\in \mathbb{N}$. As readily seen, the coefficient $\Gamma((n+1)/2-i\eta)/\Gamma((-n+1)/2-i\eta)$ can be simply expressed as
\begin{equation}\label{limit2}
    \begin{cases}
        (-1)^k\prod_{m=1}^k \rb{ \eta^2 + \rb{m-\frac{1}{2}}^2 }, & n=2k,\ k\in \mathbb{N}\\
        (-1)^k i\eta \prod_{m=1}^{k-1} \rb{ \eta^2 + m^2 }, & n=2k-1, \ k\in \mathbb{N},
    \end{cases}
\end{equation}
where an empty product is considered as $1$.
Given that $\varphi_{\ell,\eta}$ shares zeros with $F_{\ell,\eta}$, our focus will be on the investigation of $\varphi_{\ell,\eta}$ and $\varphi_{\ell+1,\eta}$ instead of $F_{\ell,\eta}$ and $F_{\ell+1,\eta}$.

In accordance with Sturm's oscillation theorem (see \cite[p. 53]{Al-Gwaiz2008}), it is evident that $F_{\ell,\eta}$ has a countable number of positive and negative zeros.
Let $\big\{\rho_{\ell,\eta,k}\big\}_{k=1}^\infty$ denote the sequence of all positive zeros of $F_{\ell,\eta}$, arranged in ascending order of magnitude. On applying the reflection formula \cite[(22)]{Gaspard2018}
\begin{equation}\label{reflect1}
    \varphi_{\ell,-\eta}(x) = \varphi_{\ell,\eta}(-x),
\end{equation}
the negative zeros of $\varphi_{\ell,\eta}$ correspond to the positive zeros of $\varphi_{\ell,-\eta}$. Thus the real zeros $\cb{x_{\ell,\eta,n}}_{n=1}^\infty$, introduced in section 2, can be denoted as $ \cb{\rho_{\ell,\eta,n}}_{n=1}^\infty \cup \cb{-\rho_{\ell,-\eta,n}}_{n=1}^\infty$. In this regard, it suffices to study the distribution of positive zeros of $F_{\ell,\eta}$. We note that the sequences $\cb{x_{\ell,\eta,n}}_{n=1}^\infty$ and $\cb{\rho_{\ell,\eta,n}}_{n=1}^\infty$ are used respectively, depending on the situation.

We proceed to establish the linearly independence of the functions $\varphi_{\ell,\eta}$ and $x\varphi_{\ell+1,\eta}$ for $\ell \in \rb{-3/2,\infty}\setminus\cb{-1}$. When $\ell=-1$, it follows from \eqref{limit1} and $\eqref{limit2}$ that $\varphi_{-1,\eta}(x) = -i\eta x\varphi_{0,\eta}(x)$. Thus, in this specific case, those functions become linearly dependent.

On the other hand, a classical result due to Wimp \cite[p. 892]{Wimp1985} provides an expression for the ratio of $\varphi_{\ell,\eta}$ and $\varphi_{\ell+1,\eta}$, for $\ell \in \rb{-3/2,\infty}\setminus\cb{-1}$ and $\eta \ne 0$, as follows:
\begin{equation}\label{ML1}
    \frac{x \varphi_{\ell+1,\eta}(x)}{\varphi_{\ell,\eta}(x)} = \frac{\ell+1}{2\rb{(\ell+1)^2+\eta^2}} \sum_{k=1}^\infty \frac{x}{x_{\ell,\eta,k}\big( x_{\ell,\eta,k}-x \big)}.
\end{equation}
By differentiating both sides, we find that under the same conditions of parameters,
\begin{equation}\label{Wrons1}
    W\qb{ \varphi_{\ell,\eta}(x), x \varphi_{\ell+1,\eta}(x) } = \frac{\ell+1}{2\rb{(\ell+1)^2+\eta^2}}(\varphi_{\ell,\eta}(x))^2\\
     \sum_{k=1}^\infty \frac{1}{\big( x- x_{\ell,\eta,k} \big)^2}.
\end{equation}
It is apparent that Wronskian $W\qb{ \varphi_{\ell,\eta}(x), x \varphi_{\ell+1,\eta}(x) }$ has removable singularities at $x= x_{\ell,\eta,k}$,
$k\ge1$, and thus it keeps a constant sign on $\R$ for given $\ell\in (-3/2,\infty)\setminus\cb{-1}$ and $\eta\ne0$. Consequently, $\varphi_{\ell,\eta}(x)$ and $x\varphi_{\ell+1,\eta}(x)$ form a fundamental set of solutions of the second order ODE, given by
\begin{equation*}
    \dm{ y'' & y' & y \\ \varphi_{\ell,\eta}'' & \varphi_{\ell,\eta}' & \varphi_{\ell,\eta} \\ (x\varphi_{\ell+1,\eta})'' & (x\varphi_{\ell+1,\eta})' & x\varphi_{\ell+1,\eta} } =0.
\end{equation*}
Thus, according to the Sturm separation theorem (see \cite[Theorem 2.8]{Al-Gwaiz2008}), it is immediate that $\varphi_{\ell,\eta}(x)$ has one and only one zero between any pair of consecutive zeros of $x\varphi_{\ell+1,\eta}(x)$, and vice versa, \rm{i.e.},
\begin{theorem}\label{thm:3.1}
    Let $\eta,\ell\in \R$ with $\ell >-3/2$, $\ell\ne-1$ and $\eta\ne 0$. Then the separation property for the zeros of $\varphi_{\ell,\eta}(x)$ and $x\varphi_{\ell+1,\eta}(x)$ holds according to the following pattern{\rm :}
    \begin{equation*}
        0< \rho_{\ell,\eta,1} < \rho_{\ell+1,\eta,1} < \rho_{\ell,\eta,2} < \cdots.
    \end{equation*}
\end{theorem}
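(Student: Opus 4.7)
The plan is to invoke the Sturm separation theorem for the fundamental pair $\{\varphi_{\ell,\eta},\, x\varphi_{\ell+1,\eta}\}$, already certified in the preceding discussion through the Wronskian identity \eqref{Wrons1}, and to pin down the direction of interlacing from the behavior at the origin. The interlacing itself will follow essentially for free, so the real content is the initial inequality $\rho_{\ell,\eta,1}<\rho_{\ell+1,\eta,1}$.

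First I divide the determinantal ODE preceding the theorem by its leading coefficient $W[\varphi_{\ell,\eta},\, x\varphi_{\ell+1,\eta}]$, which by \eqref{Wrons1} extends continuously to all of $\R$ (the singularities at $x_{\ell,\eta,k}$ being removable) and keeps a constant nonzero sign. The resulting normalized second-order linear ODE has continuous coefficients on a neighborhood of $[0,\infty)$, so the Sturm separation theorem applies on this interval: between any two consecutive zeros of either solution lies exactly one zero of the other. To seed the ordering I use that $x\varphi_{\ell+1,\eta}$ vanishes at $x=0$ (simply, since $\varphi_{\ell+1,\eta}(0)=1/\Gamma(2\ell+4)\ne 0$ for $\ell>-3/2$) and that its next positive zero is $\rho_{\ell+1,\eta,1}$. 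Sturm separation applied to the consecutive pair $\{0,\,\rho_{\ell+1,\eta,1}\}$ then forces exactly one zero of $\varphi_{\ell,\eta}$ in $(0,\rho_{\ell+1,\eta,1})$; since $\varphi_{\ell,\eta}(0)=1/\Gamma(2\ell+2)\ne 0$ (here the hypothesis $\ell\ne-1$ is essential), this zero is strictly positive, and by the minimality of $\rho_{\ell,\eta,1}$ it must coincide with $\rho_{\ell,\eta,1}$.

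The full chain $0<\rho_{\ell,\eta,1}<\rho_{\ell+1,\eta,1}<\rho_{\ell,\eta,2}<\cdots$ then follows by a straightforward induction, alternately applying Sturm separation to consecutive zeros of $\varphi_{\ell,\eta}$ (which produces a zero of $x\varphi_{\ell+1,\eta}$ between them, necessarily the next one in the ordering) and to consecutive zeros of $x\varphi_{\ell+1,\eta}$ (producing the next zero of $\varphi_{\ell,\eta}$). The main subtlety I anticipate is justifying Sturm separation up to the singular point $x=0$ of the ODE \eqref{ode2}; I would bypass this by working exclusively with the derived determinantal ODE, whose coefficients, after normalization, remain regular at $x=0$ precisely because the Wronskian is everywhere nonzero and continuous.
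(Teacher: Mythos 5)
Your proposal is correct and follows essentially the same route as the paper: both rest on the Wronskian identity \eqref{Wrons1} showing that $W[\varphi_{\ell,\eta},\,x\varphi_{\ell+1,\eta}]$ has removable singularities and constant nonzero sign on $\R$, so that the pair forms a fundamental system of the determinantal ODE and the Sturm separation theorem yields the interlacing, with the zero of $x\varphi_{\ell+1,\eta}$ at the origin anchoring the ordering $\rho_{\ell,\eta,1}<\rho_{\ell+1,\eta,1}$. Your added care in normalizing the determinantal ODE by the nonvanishing Wronskian to avoid the singularity of \eqref{ode2} at $x=0$ is a detail the paper leaves implicit ("it is immediate"), but it is the same argument.
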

In addition, an interlacing pattern for the negative zeros of $\varphi_{\ell,\eta}$ comes directly from \eqref{reflect1}.

\begin{remark}\label{rem:3.1} \
    \begin{enumerate}[label=(\roman*)]
        \item This result has been initially addressed by {\v S}tampach and {\v S}{\v t}ov{\' i}{\v c}ek \cite{StampachStovicek2014}, as presented in Theorem \ref{thm:A} within the framework of $\phi_{\ell,\eta}$ and $\phi_{\ell+1,\eta}$.
        \item Theorem \ref{thm:C} provides that $\varphi_{\ell,\eta}(x)$ and $x\varphi_{\ell+1,\eta}(x)$ have only real zeros for $\ell$ satisfying $\ell >-3/2$ and $\ell\ne -1$.

        \item By making use of the non-vanishing property for $W\qb{ \varphi_{\ell,\eta}(x), x \varphi_{\ell+1,\eta}(x) }$, one can readily confirm that the zeros of $\varphi_{\ell,\eta}(x)$ are all simple, as the Wronskian vanishes at zero with multiplicity exceeding $1$. 
    \end{enumerate}
\end{remark}

In contrast, when $\ell\le -3/2$, the formula \eqref{Wrons1} is no longer available and the Wronskian $W\qb{ \varphi_{\ell,\eta}(x), x \varphi_{\ell+1,\eta}(x) }$ actually has at least two zeros (counting multiplicity) on the real line, leading to the breakdown of the separation property for the zeros of $\varphi_{\ell,\eta}(x)$ and those of $x\varphi_{\ell+1,\eta}(x)$ on $\R$. Remarkably, despite this breakdown, $\varphi_{\ell,\eta}(x)$ and $\varphi_{\ell+1,\eta}(x)$ maintain an interlacing property within each of the intervals $\big( {-\infty}, 0\big)$ and $\big(0,\infty\big)$. Our objective is to demonstrate how the configuration of zeros changes in the scenario where $\ell \le -3/2$ and $\eta \ne 0$.

We recall the several relations of $F_{\ell,\eta}$ (see \cite[p. 539]{AbramowitzStegun1972}) as follows:
\begin{equation}\label{recur1}
    \begin{split}
        (\ell+1)F_{\ell,\eta}'(x) &= \rb{ \frac{(\ell+1)^2}{x}+\eta }F_{\ell,\eta}(x) - \sqrt{ (\ell+1)^2+\eta^2 } F_{\ell+1,\eta}(x),\\
        \ell\, F_{\ell,\eta}'(x) &= -\rb{ \frac{\ell^2}{x}+\eta }F_{\ell,\eta}(x) + \sqrt{ \ell^2 + \eta^2 } F_{\ell-1,\eta}(x).
    \end{split}
\end{equation}
\begin{equation}\label{recur2}
    \begin{split}
        \rb{\ell+1}\sqrt{\ell^2+\eta^2} F_{\ell-1,\eta}(x) - \rb{2\ell+1}\rb{\eta + \frac{\ell(\ell+1)}{x}} F_{\ell,\eta}(x) \\
        \pushright{+ \ell \sqrt{(\ell+1)^2+\eta^2} F_{\ell+1,\eta}(x)= 0.}
    \end{split}
\end{equation}

Let $\eta,\ell\in \R$ with $\ell \ne -1$, $\eta\ne 0$, and let us introduce two auxiliary functions, given by
\begin{equation}\label{UV}
    U_\ell(x)= x^{-(\ell+1)} e^{-\frac{\eta}{\ell+1}x} F_{\ell,\eta}(x),\quad V_{\ell+1}(x)= x^{\ell+1} e^{\frac{\eta}{\ell+1}x} F_{\ell+1,\eta}(x).
\end{equation}
Throughout this paper, the principal branch is chosen such that $-\pi < \text{arg}(x) \le \pi$.

\begin{lemma}\label{lem:3.1}
    Let $\eta,\ell\in \R$ with $\ell \ne -1$, $\eta\ne 0$. Then we have
    \begin{equation}\label{Wron1}
        W\qb{ U_\ell,\, V_{\ell+1} }(x) = \frac{\sqrt{ (\ell+1)^2 + \eta^2 }}{\ell+1}\big( F_{\ell,\eta}^2(x) + F_{\ell+1,\eta}^2(x) \big).
    \end{equation}
    Moreover, $\varphi_{\ell,\eta}$ and $\varphi_{\ell+1,\eta}$ cannot have common zeros except the origin. Consequently, $(\ell+1)x^{-2(\ell+1)}W\qb{ U_\ell,\, V_{\ell+1} }(x)>0$ for $x \in \R \setminus \cb{0}$.
\end{lemma}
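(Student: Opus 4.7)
The plan is to compute $W[U_\ell, V_{\ell+1}]$ directly from the definitions, collapse the resulting expression using the recurrences \eqref{recur1}, and then deduce the remaining two statements as corollaries of the identity \eqref{Wron1}.

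First, I would differentiate $U_\ell$ and $V_{\ell+1}$ and observe that their prefactors $x^{-(\ell+1)}e^{-\eta x/(\ell+1)}$ and $x^{\ell+1}e^{\eta x/(\ell+1)}$ are mutual reciprocals, so their product collapses to $1$ in the Wronskian while their logarithmic derivatives yield a single explicit correction term:
\begin{equation*}
W[U_\ell, V_{\ell+1}](x) = F_{\ell,\eta}(x)\, F_{\ell+1,\eta}'(x) - F_{\ell,\eta}'(x)\, F_{\ell+1,\eta}(x) + 2\left(\frac{\ell+1}{x} + \frac{\eta}{\ell+1}\right) F_{\ell,\eta}(x)\, F_{\ell+1,\eta}(x).
\end{equation*}
Next, I would substitute the first line of \eqref{recur1} for $F_{\ell,\eta}'$ and its second line with the index shift $\ell \mapsto \ell + 1$ for $F_{\ell+1,\eta}'$. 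A short expansion shows
\begin{equation*}
F_{\ell,\eta}F_{\ell+1,\eta}' - F_{\ell,\eta}'F_{\ell+1,\eta} = -2\left(\frac{\ell+1}{x}+\frac{\eta}{\ell+1}\right)F_{\ell,\eta}F_{\ell+1,\eta} + \frac{\sqrt{(\ell+1)^2+\eta^2}}{\ell+1}\bigl(F_{\ell,\eta}^2+F_{\ell+1,\eta}^2\bigr),
\end{equation*}
and the first summand exactly cancels the correction term, leaving \eqref{Wron1}.

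For the no-common-zeros assertion, I would argue by contradiction. Suppose $\varphi_{\ell,\eta}(x_0)=\varphi_{\ell+1,\eta}(x_0)=0$ at some $x_0\neq0$. Up to nonvanishing prefactors relating $\varphi$ to $F$, this forces $F_{\ell,\eta}(x_0)=F_{\ell+1,\eta}(x_0)=0$. Substituting into the first line of \eqref{recur1} and using $\ell\neq-1$ yields $F_{\ell,\eta}'(x_0)=0$ as well. But $x_0$ is an ordinary point of the Coulomb ODE \eqref{ode}, so the uniqueness theorem for second-order linear IVPs forces $F_{\ell,\eta}\equiv0$ on the connected component of $\R\setminus\{0\}$ containing $x_0$, contradicting the nontriviality of $F_{\ell,\eta}$.

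For the positivity, combining \eqref{Wron1} with \eqref{structure} gives
\begin{equation*}
(\ell+1)\, x^{-2(\ell+1)}\, W[U_\ell, V_{\ell+1}](x) = \sqrt{(\ell+1)^2+\eta^2}\,\bigl(C_{\ell,\eta}^2\, \phi_{\ell,\eta}(x)^2 + C_{\ell+1,\eta}^2\, x^2\, \phi_{\ell+1,\eta}(x)^2\bigr),
\end{equation*}
where I have used the principal-branch identities $x^{-2(\ell+1)}(x^{\ell+1})^2 = 1$ and $x^{-2(\ell+1)}(x^{\ell+2})^2 = x^2$, both valid for every $x \in \R\setminus\{0\}$ because $x^a \cdot x^a = e^{2a\log x} = x^{2a}$ holds without any phase ambiguity. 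Since $\phi_{\ell,\eta}$, $\phi_{\ell+1,\eta}$, $C_{\ell,\eta}$, and $C_{\ell+1,\eta}$ are real on $\R$, the right-hand side is a genuine sum of nonnegative real terms; the previous step rules out simultaneous vanishing of the two $\phi$-factors at any $x\neq0$, delivering strict positivity.

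The main delicate point I anticipate is the branch-cut bookkeeping in the last step: for non-integer $\ell$ and $x<0$, each of $U_\ell(x)$, $V_{\ell+1}(x)$, and $F_{\ell,\eta}(x)^2$ is individually complex under the principal branch, yet the combination $(\ell+1)x^{-2(\ell+1)}W[U_\ell,V_{\ell+1}](x)$ must end up real and strictly positive. Everything hinges on the fact that only \emph{squares} $(x^{\ell+1})^2$ and $(x^{\ell+2})^2$ appear in the final expression, so the unambiguous identity $x^a\cdot x^a = x^{2a}$ can be invoked to cancel all phase factors cleanly against $x^{-2(\ell+1)}$.
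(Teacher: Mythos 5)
Your proposal is correct and follows essentially the same route as the paper: the identity \eqref{Wron1} is obtained by feeding the two recurrences in \eqref{recur1} into the Wronskian (the paper packages this by showing $U_\ell'$ and $V_{\ell+1}'$ are each proportional to the complementary weighted Coulomb function, while you expand first and cancel the logarithmic-derivative cross term, which is the same algebra), the no-common-zero claim is the same recurrence-plus-ODE-uniqueness argument, and the positivity follows from the same sum-of-squares rewriting in terms of $\phi_{\ell,\eta}$ and $x\phi_{\ell+1,\eta}$ (equivalently the paper's $\varphi$'s). Your explicit branch-cut remark for $x<0$ is a welcome clarification of a point the paper leaves implicit.
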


\begin{proof}
    The relation \eqref{recur1} can be reformulated as
    \begin{align*}
        &U_\ell'(x) = - \frac{\sqrt{ (\ell+1)^2 + \eta^2 }}{\ell+1}\ x^{-(\ell+1)} e^{-\frac{\eta}{\ell+1}x} F_{\ell+1,\eta}(x),\\
        &V_{\ell+1}'(x) = \frac{\sqrt{ (\ell+1)^2 + \eta^2 }}{\ell+1}\ x^{\ell+1} e^{\frac{\eta}{\ell+1}x} F_{\ell,\eta}(x).
    \end{align*}
    Then the subsequent deduction yields the expression
    \begin{equation*}
        W\qb{ U_\ell,\, V_{\ell+1} }(x) = \frac{\sqrt{ (\ell+1)^2 + \eta^2 }}{\ell+1}\big( F_{\ell,\eta}^2(x) + F_{\ell+1,\eta}^2(x) \big).
    \end{equation*}
    Equivalently, this can be expressed as
    \begin{multline*}
        h_{\ell,\eta}(x) = 2^{2\ell} e^{-\pi\eta}\vb{ \Gamma(\ell+1+i\eta) }^2 \sqrt{ (\ell+1)^2 + \eta^2 }\\
        \cdot \Big( \varphi_{\ell,\eta}^2(x) + 4\big((\ell+1)^2 +\eta^2 \big)x^2 \varphi_{\ell+1,\eta}^2(x)  \Big),
    \end{multline*}
    where $h_{\ell,\eta}(x) = (\ell+1)x^{-2(\ell+1)}W\qb{ U_\ell,\, V_{\ell+1} }(x)$. Thus $h_{\ell,\eta}$ remains positive except at the origin and common zeros of $\varphi_{\ell,\eta}$ and $\varphi_{\ell+1,\eta}$.
    
    Suppose, on the contrary, that $\zeta \ne 0$ is any common zero of $\varphi_{\ell,\eta}$ and $\varphi_{\ell+1,\eta}$. Then it follows from the recurrence \eqref{recur1} that $\varphi_{\ell,\eta}'\rb{\zeta} = 0$. 
    In addition, the equation \eqref{ode} leads that $\varphi_{\ell,\eta}^{(n)}\rb{\zeta} = 0$ for all $n\ge0$. Thus the Taylor expansion for $\varphi_{\ell,\eta}(x)$ centered at $x=\zeta$ must be identically zeros, which is a contradiction. Therefore, $\varphi_{\ell,\eta}$ and $\varphi_{\ell+1,\eta}$ cannot share zeros, and thus $h_{\ell,\eta}(x)>0$ for $x\ne 0$.
\end{proof}

On making use of the above lemma, we elucidate the pattern on the zeros of $\varphi_{\ell,\eta}$ and $\varphi_{\ell+1,\eta}$ for $\ell\le -3/2$ as follows:

\begin{theorem}\label{thm:3.2}
    Let $\eta,\ell\in \R$ with $\ell \le -3/2$, $\eta\ne 0$. then $\varphi_{\ell,\eta}$ has countably many zeros on $\R$ which are all simple. Moreover, the interlacing property for the real zeros of $\varphi_{\ell,\eta}$ and $\varphi_{\ell+1,\eta}$ holds according to the following pattern\,{\rm :}
    \begin{equation*}
        0< \rho_{\ell+1,\eta,1} < \rho_{\ell,\eta,1} < \rho_{\ell+1,\eta,2} < \rho_{\ell,\eta,2} < \cdots.
    \end{equation*}
\end{theorem}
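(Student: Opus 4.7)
The plan is to transplant the Sturm-separation proof of Theorem~\ref{thm:3.1} to the regime $\ell \le -3/2$, with the Wronskian identity from Lemma~\ref{lem:3.1} now carrying the opposite sign (since the prefactor $1/(\ell+1)$ is negative), which reverses the order of the first zeros. Three ingredients suffice—simplicity of real nonzero zeros, at-least-one-zero in between, and at-most-one-zero in between—and only the determination of which first zero comes first needs fresh input from the origin.

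For simplicity: a multiple zero $\rho \in \R\setminus\{0\}$ of $\varphi_{\ell,\eta}$ would force $U_\ell(\rho) = U_\ell'(\rho) = 0$, whence $W[U_\ell, V_{\ell+1}](\rho) = 0$, contradicting Lemma~\ref{lem:3.1}. Infinitely many real zeros follow from Theorem~\ref{thm:C}(ii) when $\ell \notin -\mathbb{N}/2$, and from the limit identity \eqref{limit1}--\eqref{limit2} otherwise (which writes $\varphi_{\ell,\eta}$ as a nonzero scalar multiple of $x^n \varphi_{(n-1)/2,\eta}$). For the strict alternation on $(0,\infty)$, the first line of \eqref{recur1} gives $F_{\ell+1,\eta}(\alpha) = -\frac{\ell+1}{\sqrt{(\ell+1)^2+\eta^2}}\, F_{\ell,\eta}'(\alpha)$ at any positive zero $\alpha$ of $F_{\ell,\eta}$. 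Since $F_{\ell,\eta}'$ flips sign at consecutive zeros of $F_{\ell,\eta}$, so does $F_{\ell+1,\eta}$, forcing at least one zero of $F_{\ell+1,\eta}$ in every open interval between consecutive zeros of $F_{\ell,\eta}$. The reverse inclusion follows by the same identity with $\ell$ replaced by $\ell+1$, and a pigeonhole argument yields one-for-one alternation.

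The delicate step is pinning down the order of the first zeros. Writing $F_{\ell,\eta}(x) = 2^\ell e^{-\pi\eta/2}|\Gamma(\ell+1+i\eta)|\, x^{\ell+1} \varphi_{\ell,\eta}(x)$ with $\varphi_{\ell,\eta}(0) = 1/\Gamma(2\ell+2)$, and using the identity $\Gamma(2\ell+4) = (2\ell+2)(2\ell+3)\Gamma(2\ell+2)$ together with the positivity of $(2\ell+2)(2\ell+3)$ for $\ell < -3/2$ (away from $-\mathbb{N}/2$), I see that $\varphi_{\ell,\eta}$ and $\varphi_{\ell+1,\eta}$ share sign at the origin. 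Assuming both positive near $0^+$, one has $F_{\ell,\eta}'(\rho_{\ell,\eta,1}) < 0$, and the recurrence together with $\ell+1<0$ gives $F_{\ell+1,\eta}(\rho_{\ell,\eta,1}) < 0$. The intermediate value theorem then places a zero of $F_{\ell+1,\eta}$ in $(0,\rho_{\ell,\eta,1})$, so $\rho_{\ell+1,\eta,1} < \rho_{\ell,\eta,1}$. For the exceptional values $\ell \in -\mathbb{N}/2$ with $\ell \le -3/2$, I would use the limit identity to rewrite both $\varphi_{\ell,\eta}$ and $\varphi_{\ell+1,\eta}$ as nonzero scalar multiples of $x^{n_i}\varphi_{\ell_i,\eta}$ with $\ell_i > -3/2$ and then invoke Theorem~\ref{thm:3.1} to transfer the interlacing. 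The main obstacle is managing the sign bookkeeping through the $\ell+1<0$ regime cleanly; the Sturm-separation skeleton is otherwise routine.
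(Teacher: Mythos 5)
Your proposal is correct and follows essentially the same route as the paper: simplicity and the sign alternation of $\varphi_{\ell+1,\eta}$ at consecutive zeros of $\varphi_{\ell,\eta}$ come from the recurrence \eqref{recur1} (equivalently, the positivity of the Wronskian in Lemma~\ref{lem:3.1}), the order of the first zeros is pinned down by the same-sign relation $\varphi_{\ell+1,\eta}(0)=\varphi_{\ell,\eta}(0)/\bigl[2(\ell+1)(2\ell+3)\bigr]$ with $(\ell+1)(2\ell+3)>0$ for $\ell<-3/2$, and the half-integer exceptional values are reduced to Theorem~\ref{thm:3.1} via \eqref{limit1}--\eqref{limit2}. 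The only points to tighten are that the ``at most one'' step needs the \emph{second} relation in \eqref{recur1} (with $\ell\mapsto\ell+1$), not the first, and that the negative real zeros should be dispatched explicitly via the reflection formula \eqref{reflect1}, as the paper does.
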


\begin{proof}
    We proceed to apply Sturm's oscillation theorem (see \cite[p. 53]{Al-Gwaiz2008}) on the differential equation \eqref{ode}, which assures that $F_{\ell,\eta}$ (equivalently $\varphi_{\ell,\eta}$) has countably many zeros on $\R$, regardless of the choice of $\eta,\ell$. Recall that those zeros are denoted by $\big\{ x_{\ell,\eta,k} \big\}_{k=1}^\infty$. Let us assume $\ell<-3/2$, $\ell \ne -\mathbb{N}/2$, and we define 
    \begin{equation*}
        h_{\ell,\eta}(x) = (\ell+1)x^{-2(\ell+1)}W\qb{ U_\ell,\, V_{\ell+1} }(x).
    \end{equation*}
    Then Lemma \ref{lem:3.1} yields that for $\xi_1 \in \big\{ x_{\ell,\eta,k} \big\}_{k=1}^\infty$ and $\xi_2 \in \big\{ x_{\ell+1,\eta,k} \big\}_{k=1}^\infty$, both $h_{\ell,\eta}(\xi_1)>0$ and $h_{\ell,\eta}(\xi_2)>0$ hold. In other words, it can be respectively expressed as
    \begin{equation}\label{sign1}
        -(\ell+1)\varphi_{\ell,\eta}'(\xi_1) \varphi_{\ell+1,\eta}(\xi_1)>0,\quad (\ell+1)\varphi_{\ell,\eta}(\xi_2) \varphi_{\ell+1,\eta}'(\xi_2)>0,
    \end{equation}
    leading that the zeros of $\varphi_{\ell,\eta}$ and $\varphi_{\ell+1,\eta}$ are all simple, since $\varphi_{\ell,\eta}'(\xi_1)\ne 0$ and $\varphi_{\ell+1,\eta}'(\xi_2)\ne 0$. 
    
    In view of \eqref{reflect1}, it suffices to verify the interlacing property on $(0,\infty)$. Let $\rho,\, \bar{\rho}\in \big\{ \rho_{\ell,\eta,k}\big\}_{k=1}^\infty$ be any consecutive positive zeros of $\varphi_{\ell,\eta}$ such that $\rho<\bar{\rho}$. Then it is straightforward from the simplicity of zeros of $\varphi_{\ell,\eta}$ that $\varphi_{\ell,\eta}'(\rho)\varphi_{\ell,\eta}'(\bar{\rho})<0$, and hence, by \eqref{sign1}, we find that $\varphi_{\ell+1,\eta}(\rho)\varphi_{\ell+1,\eta}(\bar{\rho})<0$, which implies that $\varphi_{\ell+1,\eta}$ has odd number of zeros in $\rb{\rho,\bar{\rho}}$. 
    
    Regarding the existence of zero of $\varphi_{\ell+1,\eta}$ within the interval $\big(0,\rho_{\ell,\eta,1}\big)$ where $\rho_{\ell,\eta,1}$ represents the first positive zero of $\varphi_{\ell,\eta}$, we claim that
    $$\varphi_{\ell+1,\eta}(0)\varphi_{\ell+1,\eta}\big( \rho_{\ell,\eta,1} \big)<0.$$
    Since every positive zero of $\varphi_{\ell,\eta}$ is simple, and $\varphi_{\ell,\eta}$ is continuous, it is apparent that $\varphi_{\ell,\eta}(0)\varphi_{\ell,\eta}'\big( \rho_{\ell,\eta,1} \big)<0$. Consequently, by multiplying expressions in \eqref{sign1} and using the relation $\varphi_{\ell+1,\eta}(0)= \varphi_{\ell,\eta}(0)/\qb{2(2\ell+3)(\ell+1)}$, we deduce
    \begin{align*}
        0 &< -(\ell+1)\varphi_{\ell+1,\eta}^2(0)\varphi_{\ell,\eta}'\big( \rho_{\ell,\eta,1} \big) \varphi_{\ell+1,\eta}\big( \rho_{\ell,\eta,1} \big) \\
        &= \frac{-\varphi_{\ell,\eta}(0)\varphi_{\ell,\eta}'\big( \rho_{\ell,\eta,1} \big)}{2(2\ell+3)} \varphi_{\ell+1,\eta}(0)\varphi_{\ell+1,\eta}\big( \rho_{\ell,\eta,1} \big),
    \end{align*}
    and hence the claim is now proved.

    As verified before, $\varphi_{\ell+1,\eta}$ has odd number of positive zeros on any subinterval of $(0,\infty)$ partitioned by $\big\{ \rho_{\ell,\eta,k}\big\}_{k=1}^\infty $. 
    It remains to establish the uniqueness of a zero in $\varphi_{\ell+1,\eta}$ in each of subintervals, say $I$. For the sake of contradiction, we suppose that $\varphi_{\ell+1,\eta}$ has at least two zeros in $I$, denoting $\omega$ and $\bar{\omega}$ as any consecutive zeros of $\varphi_{\ell+1,\eta}$ within $I$. We observe from \eqref{sign1} that
    \begin{equation*}
        (\ell+1)^2\varphi_{\ell,\eta}(\omega) \varphi_{\ell,\eta}(\bar{\omega})\varphi_{\ell+1,\eta}'(\omega) \varphi_{\ell+1,\eta}'(\bar{\omega})>0
    \end{equation*}
    which indicates that $\varphi_{\ell+1,\eta}'\rb{w}$ and $\varphi_{\ell+1,\eta}'\rb{\bar{w}}$ have the same sign as $\varphi_{\ell,\eta}$ maintains constant sign on $I$. Thus it is plain to see that there exists at least one zero of $\varphi_{\ell+1,\eta}$ on $\rb{\omega,\bar{\omega}}$. This contradicts the choice of $\omega$ and $\bar{\omega}$.

    If $\ell \in -\mathbb{N}/2 \cap (-\infty , -3/2]$, the result follows from \eqref{limit1} and Theorem \ref{thm:3.1}, and thus we completes the proof.
\end{proof}

    \begin{figure}[!ht]
    \centering
    \includegraphics[width=0.495\textwidth]{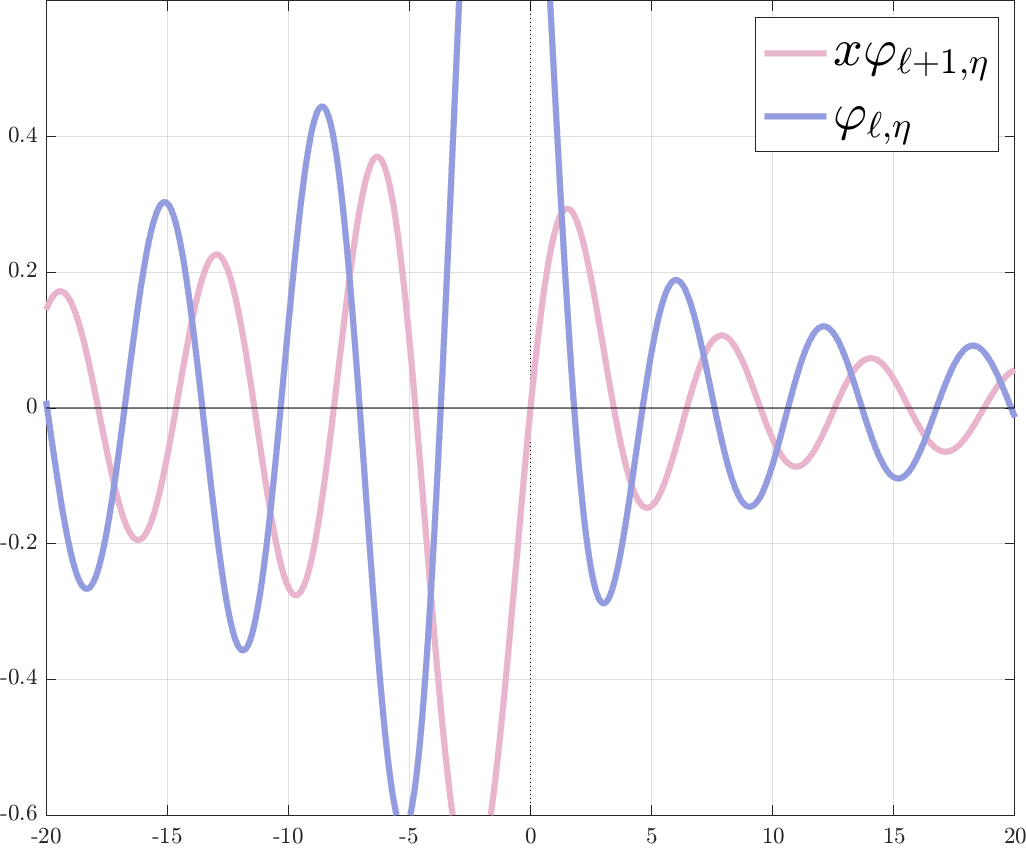}
    \includegraphics[width=0.495\textwidth, height=0.412\textwidth]{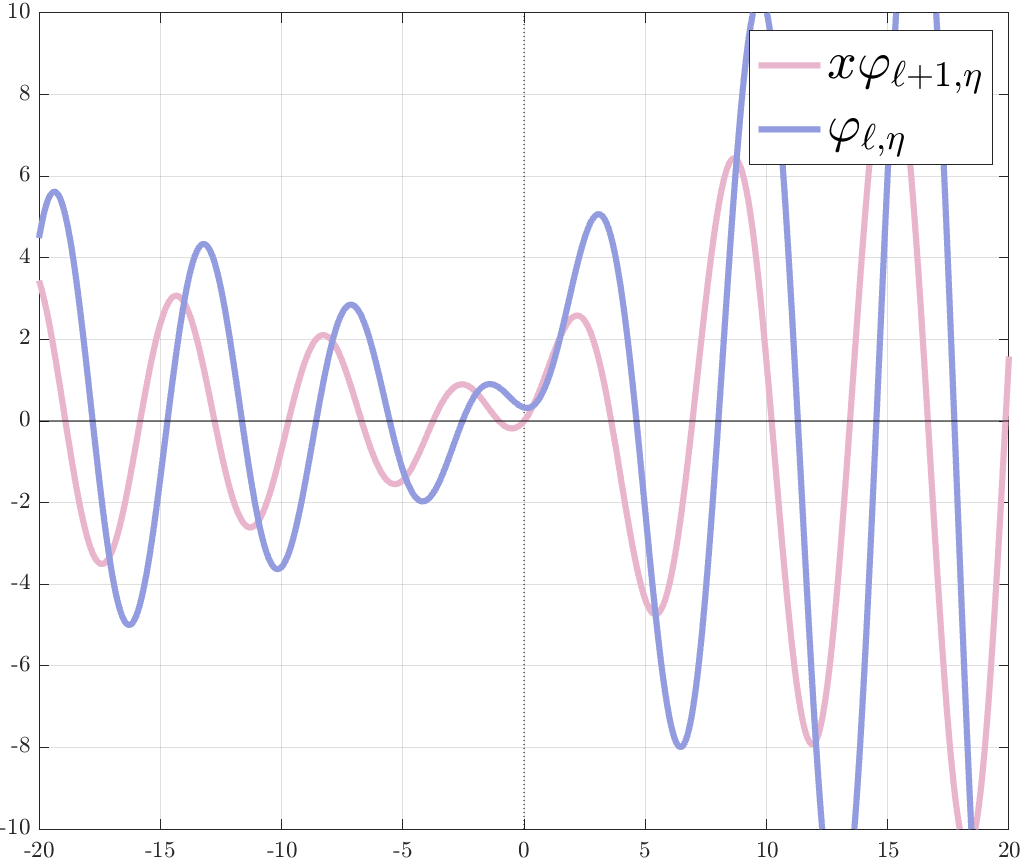}
    \caption{The graphs of $\varphi_{\ell,\eta}$ and $x\varphi_{\ell+1,\eta}$ when $\eta = \ell = -1/3$ (left) and $\eta = 1/3$ and $\ell = -5/3$ (right).}
    \label{fig:3.1}
    \end{figure}

    \begin{remark}\
    \begin{enumerate}[label=(\roman*)]
        \item In the numerical simulations performed in this paper, we employed the \emph{Special Functions in Physics (SpecFunPhys) Toolbox}, as introduced in \cite[Chap. 7]{Schweizer2021}. Since $F_{\ell,\eta}(x)$ gets complex-valued for $\ell <-1$ and $x<0$, $\varphi_{\ell,\eta}$ or $\phi_{\ell,\eta}$ were used for the illustration.

        \item The left and right in Figure \ref{fig:3.1} illustrate zero configuration stated in Theorem \ref{thm:3.1} and \ref{thm:3.2}, respectively.
    \end{enumerate}
    \end{remark}

\section{Monotonicity of the zeros of \texorpdfstring{$F_{\ell,\eta}$}{F ell eta}}

With $\varphi_{\ell,\eta}(x)$ being holomorphic in $\ell\in \rb{-1,\infty}$ and entire in $x$, the implicit function theorem ensures that the function $\ell \mapsto \rho_{\ell,\eta,k}$, where $k\in \mathbb{N}$, is continuously differentiable for $\ell >-1$. Likewise, the function $\eta \mapsto \rho_{\ell,\eta,k}$ is continuously differentiable for $\eta \in \R$. In this section, we investigate the monotonicity of $\rho_{\ell,\eta}$ in terms of $\eta,\ell$ and thereby we answer \cite[Open problem 1-2]{Baricz-C-D-T2016}.

Ismail and Zhang \cite{IsmaiZhangl1988} introduced a novel approach to investigate the monotonicity of eigenvalues in a self adjoint operator, namely the Hellmann-Feynman theorem, which is well-known in quantum chemistry. We refer the readers to \cite{Ismail1987,IsmailMuldoon1988} for the finite-dimensional version and the zeros of Bessel functions (when $\eta=0$) of the Hellmann-Feynman theorem, respectively. In this section, we shall apply this argument, particularly as used in \cite{Baricz2017-2}, to derive Hellmann-Feynman type theorem for the zeros of $F_{\ell,\eta}$.

\begin{theorem}\label{thm:4.1}
For any fixed $k\in \mathbb{N}$ and $\eta \in \R$, the function $\ell \mapsto \rho_{\ell,\eta,k}$ is increasing on the interval $\rb{-1/2,\infty}$, that is,
\begin{equation*}
\frac{d \rho_{\ell,\eta,k}}{d \ell} > 0. \tag{$\ell>-1/2$}
\end{equation*}
\end{theorem}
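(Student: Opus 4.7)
The plan is to run a Hellmann--Feynman type argument in the spirit of \cite{IsmaiZhangl1988,Baricz2017-2}, by differentiating the Coulomb ODE \eqref{ode} in $\ell$ and extracting a sign-definite formula for $d\rho_{\ell,\eta,k}/d\ell$.

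Concretely, set $u(x) := F_{\ell,\eta}(x)$ and $v(x) := \partial_\ell F_{\ell,\eta}(x)$. Since $u$ satisfies $u'' + Q(x,\ell)\,u = 0$ with $Q(x,\ell) = 1 - 2\eta/x - \ell(\ell+1)/x^2$, differentiating this in $\ell$ yields the inhomogeneous equation $v'' + Q(x,\ell)\,v = \frac{2\ell+1}{x^2}\, u$. Eliminating $Q$ between the two identities produces the Lagrange-type formula
\begin{equation*}
    \frac{d}{dx}\bigl[\,u(x)v'(x) - v(x)u'(x)\,\bigr] = \frac{2\ell+1}{x^2}\, u(x)^2,
\end{equation*}
which I would integrate over $(0,\rho)$ with $\rho := \rho_{\ell,\eta,k}$. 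At $x=\rho$, since $u(\rho)=0$ and implicit differentiation of the identity $F_{\ell,\eta}(\rho(\ell))=0$ gives $v(\rho) = -F'_{\ell,\eta}(\rho)\,\rho'(\ell)$, the boundary contribution at $\rho$ becomes $\bigl(F'_{\ell,\eta}(\rho)\bigr)^2\rho'(\ell)$. At $x\to 0^+$, I would use the expansion $F_{\ell,\eta}(x) = C_{\ell,\eta}\,x^{\ell+1}(1+O(x))$ together with $\partial_\ell x^{\ell+1} = x^{\ell+1}\ln x$ and show that the leading logarithmic terms in $uv'$ and $vu'$ cancel, leaving $uv' - vu' = C_{\ell,\eta}^2\, x^{2\ell+1} + o(x^{2\ell+1})$, which tends to $0$ precisely when $\ell > -1/2$.

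Putting the pieces together yields the Hellmann--Feynman identity
\begin{equation*}
    \bigl(F'_{\ell,\eta}(\rho_{\ell,\eta,k})\bigr)^2 \frac{d\rho_{\ell,\eta,k}}{d\ell} = (2\ell+1)\int_0^{\rho_{\ell,\eta,k}} \frac{F_{\ell,\eta}(x)^2}{x^2}\, dx.
\end{equation*}
The integral converges at the origin (as $F_{\ell,\eta}^2/x^2 \sim C_{\ell,\eta}^2\, x^{2\ell}$ with $2\ell > -1$) and is strictly positive since $F_{\ell,\eta}\not\equiv 0$; moreover $F'_{\ell,\eta}(\rho_{\ell,\eta,k})\ne 0$ by the simplicity of real zeros granted by Theorem~\ref{thm:A}(ii) (equivalently Remark~\ref{rem:3.1}(iii)). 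Hence $\rho'(\ell)>0$ on $(-1/2,\infty)$, as claimed.

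The principal technical obstacle is the boundary analysis at $x=0$: one has to carefully track the logarithmic contributions from both $\partial_\ell x^{\ell+1}$ and $\partial_\ell C_{\ell,\eta}$ and verify their exact cancellation in the antisymmetric quantity $uv'-vu'$. This cancellation is what forces the threshold $\ell>-1/2$ in the statement, since both the residue $x^{2\ell+1}$ at the origin and the integral $\int_0 F_{\ell,\eta}^2(x)/x^2\,dx$ become problematic once $\ell \le -1/2$, and a genuinely different argument would be needed to cover that range.
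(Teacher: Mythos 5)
Your argument is correct, and it reaches the conclusion by a genuinely different route than the paper. The paper also invokes the Hellmann--Feynman philosophy, but it first rescales the equation to the fixed interval $(0,1)$ via $y_\ell(x)=F_{\ell,\eta}(\lambda_\ell x)$, works with the self-adjoint operator $H_\ell=-\tfrac{d^2}{dx^2}+\tfrac{\ell(\ell+1)}{x^2}$, and obtains $\tfrac{d\lambda_\ell}{d\ell}=\tfrac{\ell+1/2}{\Psi_{\ell,\eta}}\int_0^1 y_\ell^2\,\tfrac{dt}{t^2}$, where $\Psi_{\ell,\eta}=\lambda_\ell\int_0^1 y_\ell^2\,dt-\eta\int_0^1 y_\ell^2\,\tfrac{dt}{t}$. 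Because the ``eigenvalue'' $\Lambda_\ell=\lambda_\ell^2-2\eta\lambda_\ell/x$ mixes $\lambda_\ell^2$ with $\lambda_\ell/x$, the coefficient $\Psi_{\ell,\eta}$ is not manifestly positive, and the paper must spend additional effort proving $\Psi_{\ell,\eta}>0$ --- trivially for $\eta\le 0$, via the quadratic-form identity for $\eta>0$ and $\ell\ge 0$, and via Hardy's inequality for $-1/2<\ell<0$. Your version keeps the zero $\rho_{\ell,\eta,k}$ as a moving endpoint of integration on $(0,\rho)$, differentiates the potential directly, and integrates the Lagrange identity $\tfrac{d}{dx}\bigl(uv'-vu'\bigr)=\tfrac{2\ell+1}{x^2}u^2$; the boundary term at $\rho$ is automatically the square $\bigl(F'_{\ell,\eta}(\rho)\bigr)^2\rho'(\ell)$, so no analogue of $\Psi_{\ell,\eta}>0$ (and hence no Hardy inequality) is needed. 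The price is the boundary analysis at the origin, which you correctly identify and resolve: the logarithmic terms coming from $\partial_\ell x^{\ell+1}$ and $\partial_\ell C_{\ell,\eta}$ cancel in the antisymmetric combination $uv'-vu'$, leaving $C_{\ell,\eta}^2x^{2\ell+1}+o(x^{2\ell+1})\to 0$ exactly for $\ell>-1/2$, the same threshold at which $\int_0 F_{\ell,\eta}^2(x)x^{-2}\,dx$ converges. Both approaches locate the restriction $\ell>-1/2$ at the same place (integrability of $u^2/x^2$ at the origin); yours trades the operator-theoretic bookkeeping and the Hardy-inequality step for a short asymptotic cancellation, and is if anything the more economical of the two.
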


\smallskip
\begin{proof}
Scaling the differential equation in \eqref{ode} yields
\begin{equation*}
    \frac{d^2 y}{dx^2} + \rb{ \lambda^2 - \frac{2\eta \lambda}{x} - \frac{\ell(\ell+1)}{x^2} }y =0,
\end{equation*}
where $y(x) = F_{\ell,\eta}(\lambda x)$. Consider the differential operator $H_\ell = -\frac{d^2}{dx^2} + \frac{\ell(\ell+1)}{x^2}$, which is self-adjoint with respect to the canonical inner product in $L^2(0,1)$. Then it satisfies the equation $H_\ell y_\ell = \Lambda_\ell y_\ell$, where $\Lambda_\ell = \rb{\lambda_\ell^2 -\frac{2\eta \lambda_\ell}{x}} y_\ell$. 

We recall that if $\ell>-1/2$ then $F_{\ell,\eta}$ has only real zeros, as shown by Theorem \ref{thm:C}. Let $\lambda_\ell$ denote any positive real zero of $F_{\ell,\eta}$ apart from the origin, \rm{i.e.}, $\lambda_\ell = \rho_{\ell,\eta,k}$ for $k\in \mathbb{N}$. Given that the expression $y_\ell'(x)y_L(x)-y_\ell(x)y_L'(x)$ vanishes at $x=0,\,1$ when $\ell>-1/2$,
it follows that for $-1/2<\ell <L$,
\begin{equation}\label{Inner1}
    \begin{split}
        \ab{ H_\ell y_\ell , y_L } - \ab{ y_\ell, H_L y_L } &= \ab{ \rb{H_\ell - H_L} y_\ell, y_L }\\
        &= \big( \ell(\ell+1)-L(L+1) \big) \int_0^1 y_\ell(t) y_L(t) \frac{dt}{t^2}. 
    \end{split}
\end{equation}
On the other hand, we have
\begin{equation}\label{Inner2}
    \ab{ H_\ell y_\ell , y_L } - \ab{ y_\ell, H_L y_L } = \ab{ \rb{ \Lambda_\ell - \Lambda_L }y_\ell,y_L }.
\end{equation}
Dividing by $\ell-L$ and letting $L\to \ell$ in \eqref{Inner1} and \eqref{Inner2}, we observe that
\begin{equation}\label{similar}
\begin{aligned}
    (2\ell+1) \int_0^1 (y_\ell(t))^2 \frac{dt}{t^2} &= \lim_{L\to \ell} \ab{ \frac{H_\ell - H_L}{\ell-L} y_\ell,y_L }  \\
    &=  \ab{ \frac{d\Lambda_\ell}{d \ell} y_\ell,y_L }  \\
    &= 2\frac{d\lambda_\ell}{d \ell} \rb{ \lambda_\ell \int_0^1 (y_\ell(t))^2 dt - \eta \int_0^1 (y_\ell(t))^2 \frac{dt}{t} }, 
\end{aligned}
\end{equation}
in which integrals converge for $\ell>-1/2$.

We now claim that for fixed $\ell>-1/2$ and $\eta\in \R$,
\begin{equation}\label{HF1}
    \Psi_{\ell,\eta} := \lambda_\ell \int_0^1 (y_\ell(t))^2 dt - \eta \int_0^1 (y_\ell(t))^2 \frac{dt}{t}>0.
\end{equation}
It is obvious that $\Psi_\ell >0$ if $\eta \le 0$.
In the remaining case when $\eta>0$, we consider the identity $\ab{H_\ell y_\ell,y_\ell} = \ab{\Lambda_\ell y_\ell,y_\ell}$, which results in
\begin{align}
    \Psi_{\ell,\eta} &= \frac{1}{\lambda_\ell} \rb{\int_0^1 \rb{y_\ell'(t)}^2 dt + \ell(\ell+1) \int_0^1 \rb{y_\ell(t)}^2 \frac{dt}{t^2} } +\eta \int_0^1 \rb{y_\ell(t)}^2\frac{dt}{t} \notag\\
    & >0 \ifz \eta >0, \label{psi1}
\end{align}
for fixed $\ell\ge 0$. Moreover, by Hardy's inequality (see \cite[p. 243]{Hardy-L-P1952}) with $p=2$, we have, for $-1/2<\ell < 0$,
\begin{equation*}
    \int_0^1 \rb{y_\ell'(t)}^2 dt > \rb{\frac{2-1}{2}}^2 \int_0^1 \rb{y_\ell(t)}^2 \frac{dt}{t^2} \ge -\ell(\ell+1) \int_0^1 \rb{y_\ell(t)}^2 \frac{dt}{t^2}.
\end{equation*}
Thus \eqref{psi1} remains true for fixed $-1/2<\ell < 0$.

Therefore, the desired result follows from the expression
\begin{equation*}
    \frac{d\lambda_\ell}{d \ell} = \frac{\ell+1/2}{\Psi_{\ell,\eta}} \int_0^1 (y_\ell(t))^2 \frac{dt}{t^2}>0.
\end{equation*}
\end{proof}

In light of \eqref{reflect1} and the above theorem, it is evident that the negative zeros of $F_{\ell,\eta}$ are decreasing on $(-1/2,\infty)$ with respect to $\ell$ for given $\eta$.
As indicated in Figure \ref{fig:4.1}, we can see that the positive (resp. nagative) zeros are increasing (resp. decreasing) on $(-1/2,\infty)$ with respect to $\ell$, while the zeros reveal more complicated pattern on $(-\infty,-1/2)$. We also note that the trajectories when $\eta=1/5$ will be the exact reflection of Figure \ref{fig:4.1} with respect to the $\ell$-axis.
The zero-variation map corresponding to the case where $\eta =0$ can be found in \cite[\S 15.6]{Watson1922} and \cite[p. 9]{ChoChung2021}.

\begin{figure}[!ht]
    \centering
    \includegraphics[width=0.495\textwidth, height=0.412\textwidth]{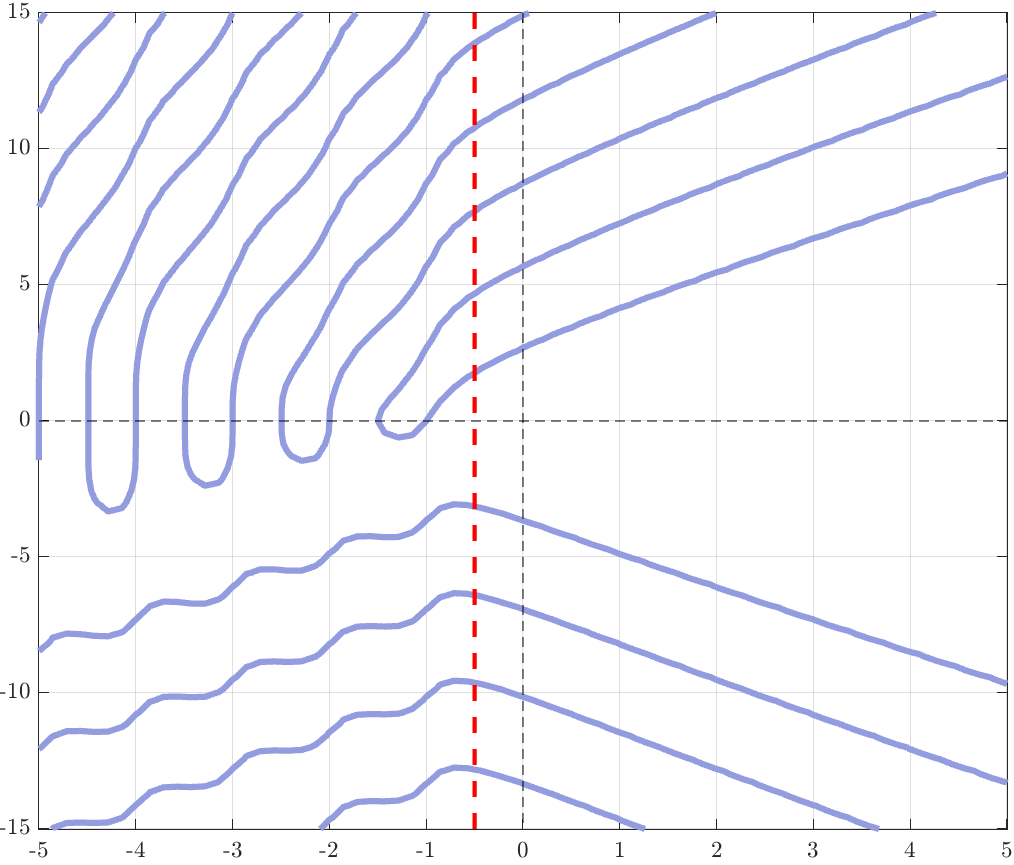}
    \caption{The trajectories of the real zeros of $F_{\ell,\eta}$ in the $\rb{\ell,x}$-plane are depicted for $-5\le \ell \le 5$, $-15\le x\le 15$ and $\eta=-1/5$. The red dotted line represents vertical line $\nu=-1/2$.} 
    \label{fig:4.1}
\end{figure}

An immediate consequence of Theorems \ref{thm:3.1} and \ref{thm:4.1} gives the following interlacing pattern for $-1/2<\ell < L <\ell+1$:
\begin{equation*}
    0< \rho_{\ell,\eta,1} < \rho_{L,\eta,1} < \rho_{\ell+1,\eta,1} < \rho_{\ell,\eta,2} < \rho_{L,\eta,2} < \rho_{\ell+1,\eta,2} < \cdots
\end{equation*}
which can simply stated as follows:
\begin{corollary}
    Let $-1/2<\ell<L$ with $\vb{\ell-L}\le1$, and let $\eta\in \R$. Then the zeros of $\varphi_{\ell,\eta}$, $\varphi_{L,\eta}$ are interlaced according to the following pattern{\rm :}
    \begin{equation*}
        0< \rho_{\ell,\eta,1} < \rho_{L,\eta,1} < \rho_{\ell,\eta,2} < \rho_{L,\eta,2} < \cdots.
    \end{equation*}
\end{corollary}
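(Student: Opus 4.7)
The plan is to derive the corollary directly from the two main results developed so far: the zero interlacing between adjacent integer shifts of $\ell$ provided by Theorem~\ref{thm:3.1} and the strict monotonicity of each positive zero in $\ell$ proved in Theorem~\ref{thm:4.1}. No new analytic input is required; the entire argument amounts to chaining known inequalities, and in fact the chain $0<\rho_{\ell,\eta,1}<\rho_{L,\eta,1}<\rho_{\ell+1,\eta,1}<\rho_{\ell,\eta,2}<\cdots$ is already sketched in the paragraph preceding the corollary. The task is simply to record it as a formal proof, deleting the redundant middle terms involving $\ell+1$ from the statement.

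Fix $k\in\mathbb{N}$ and write $L=\ell+\delta$ with $0<\delta\le1$. I would first verify that the hypotheses of the two theorems are in force throughout: since $\ell>-1/2>-3/2$ and $\ell\ne-1$, Theorem~\ref{thm:3.1} applies, and since $\ell,L>-1/2$, Theorem~\ref{thm:4.1} applies as well. The lower bound $\rho_{\ell,\eta,k}<\rho_{L,\eta,k}$ then follows at once from the strict monotonicity of $\ell\mapsto\rho_{\ell,\eta,k}$ on $(-1/2,\infty)$.

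For the upper bound $\rho_{L,\eta,k}<\rho_{\ell,\eta,k+1}$ I would split into two cases. If $\delta=1$, then $L=\ell+1$ and the inequality is just one step of the interlacing asserted in Theorem~\ref{thm:3.1}. If $0<\delta<1$, then Theorem~\ref{thm:4.1} gives $\rho_{L,\eta,k}<\rho_{\ell+1,\eta,k}$, and combining this with the Sturm-separation inequality $\rho_{\ell+1,\eta,k}<\rho_{\ell,\eta,k+1}$ from Theorem~\ref{thm:3.1} yields the desired strict inequality. Iterating over $k\in\mathbb{N}$ produces the interlacing pattern in the statement.

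There is no genuine obstacle here; the corollary is a bookkeeping consequence of the two theorems just established, and the only point deserving minor care is the case separation $\delta=1$ versus $\delta<1$, which ensures that the monotonicity step in Theorem~\ref{thm:4.1} is invoked on a nontrivial interval and the interlacing step in Theorem~\ref{thm:3.1} is invoked only between the integer shifts $\ell$ and $\ell+1$ for which it was proved.
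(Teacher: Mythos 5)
Your proof is correct and follows essentially the same route as the paper: the displayed chain $0<\rho_{\ell,\eta,1}<\rho_{L,\eta,1}<\rho_{\ell+1,\eta,1}<\rho_{\ell,\eta,2}<\cdots$ obtained by combining the monotonicity of $\ell\mapsto\rho_{\ell,\eta,k}$ from Theorem~\ref{thm:4.1} with the separation of Theorem~\ref{thm:3.1} is exactly the paper's argument, and your explicit handling of the endpoint case $L=\ell+1$ is a harmless refinement. (The only caveat, which you share with the paper itself, is that Theorem~\ref{thm:3.1} is stated for $\eta\ne0$ while the corollary allows $\eta\in\R$; for $\eta=0$ one falls back on Theorem~\ref{thm:A}.)
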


Similarly, for a fixed $\ell > -1/2$, we can deduce the monotonicity of the zero $\rho_{\ell,\eta,k}$ with respect to $\eta$ as follows:

\begin{theorem}\label{thm:4.2}
For any fixed $k \in \mathbb{N}$ and $\ell >-1/2$, the function $\eta \mapsto \rho_{\ell,\eta,k}$ are increasing on $\R$, that is,
\begin{equation*}
\frac{d \rho_{\ell,\eta,k}}{d \eta} > 0. \tag{$\eta\in \R$}
\end{equation*}
\end{theorem}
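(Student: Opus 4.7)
The plan is to reuse the self-adjoint framework from the proof of Theorem \ref{thm:4.1}, only now letting $\eta$ vary while the operator $H_\ell = -\frac{d^2}{dx^2} + \frac{\ell(\ell+1)}{x^2}$, acting on $L^2(0,1)$, is held fixed. With the substitution $y_\eta(x) = F_{\ell,\eta}(\lambda_\eta x)$ and $\lambda_\eta = \rho_{\ell,\eta,k}$, the scaled ODE reads $H_\ell y_\eta = \Lambda_\eta y_\eta$, where $\Lambda_\eta$ is now the multiplication operator by $\lambda_\eta^2 - 2\eta\lambda_\eta/x$. The boundary conditions $y_\eta(0)=y_\eta(1)=0$ hold exactly as before, the vanishing at $0$ requiring $\ell>-1/2$.

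For $\eta\ne E$, the self-adjointness of $H_\ell$ together with the vanishing of the boundary Wronskian at $0$ and $1$ yields
\[
\langle \Lambda_\eta y_\eta, y_E \rangle = \langle H_\ell y_\eta, y_E \rangle = \langle y_\eta, H_\ell y_E \rangle = \langle y_\eta, \Lambda_E y_E \rangle,
\]
which expands to
\[
\int_0^1 \left( \lambda_\eta^2 - \lambda_E^2 - \frac{2(\eta\lambda_\eta - E\lambda_E)}{t}\right) y_\eta(t)\, y_E(t)\, dt = 0.
\]
Dividing by $\eta - E$ and letting $E \to \eta$ (the $C^1$ dependence of $\lambda_\eta$ on $\eta$ is provided by the implicit function theorem, as recorded at the start of the section), the elementary identity
\[
\lim_{E\to\eta}\frac{\eta\lambda_\eta - E\lambda_E}{\eta - E} = \lambda_\eta + \eta\, \frac{d\lambda_\eta}{d\eta}
\]
drives the limiting form
\[
\frac{d\lambda_\eta}{d\eta}\left( \lambda_\eta \int_0^1 y_\eta(t)^2\, dt - \eta \int_0^1 \frac{y_\eta(t)^2}{t}\, dt \right) = \lambda_\eta \int_0^1 \frac{y_\eta(t)^2}{t}\, dt.
\]

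The parenthesized factor on the left is precisely $\Psi_{\ell,\eta}$ defined in \eqref{HF1}, which the proof of Theorem \ref{thm:4.1} has already shown to be strictly positive for every $\ell>-1/2$ and every $\eta\in\R$. Since $\lambda_\eta>0$ and the integral on the right is strictly positive, I conclude $d\lambda_\eta/d\eta > 0$, as desired. No genuine obstacle is anticipated: the vanishing of the boundary terms at $0$ and $1$, the convergence of the weighted integrals, and the delicate case-by-case positivity of $\Psi_{\ell,\eta}$ (direct when $\eta\le 0$ or $\ell\ge0$, via Hardy's inequality when $-1/2<\ell<0$) are all inherited verbatim from the previous theorem, and the only genuinely new step is the limit displayed above, which accounts for the appearance of the extra $\lambda_\eta$ on the right-hand side.
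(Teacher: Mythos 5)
Your proposal is correct and follows essentially the same route as the paper: the paper's proof of Theorem \ref{thm:4.2} simply invokes the argument of Theorem \ref{thm:4.1} with $\ell$ replaced by $\eta$ and states the resulting formula $\frac{d\lambda_\eta}{d\eta} = \frac{\lambda_\eta}{\Psi^*_{\ell,\eta}}\int_0^1 y_\eta(t)^2\,\frac{dt}{t}$, which is exactly what your limit computation produces. Your only addition is to carry out explicitly the product-rule limit giving the extra $\lambda_\eta$ on the right-hand side, and your identification of the parenthesized factor with $\Psi^*_{\ell,\eta}$ (whose positivity is indeed inherited verbatim) matches the paper.
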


\smallskip
\begin{proof}
Let $\ell>-1/2$ be fixed. In an analogous argument to the proof of Theorem \ref{thm:4.1}, if we replace the parameter $\ell$ with $\eta$ and consider $\lambda_\eta = \rho_{\ell,\eta,k}$ where $k\in \mathbb{N}$, a similar result of \eqref{similar} in terms of $\eta$ can be derived as
\begin{equation*}
    \frac{d\lambda_\eta}{d \eta} = \frac{\lambda_\eta}{\Psi_{\ell,\eta}^*} \int_0^1 (y_\eta(t))^2 \frac{dt}{t},
\end{equation*}
where $y_\eta(x) = F_{\ell,\eta}(\lambda_\eta \,x)$ and
\begin{equation*}
    \Psi_{\ell,\eta}^* = \lambda_\eta \int_0^1 (y_\eta(t))^2 dt - \eta \int_0^1 (y_\eta(t))^2 \frac{dt}{t},
\end{equation*}
Moreover, the same reasoning used in \eqref{HF1} shows that $\Psi_{\ell,\eta}^* >0$ for $\ell>-1/2$ and $\eta\in \R$, leading to the desired result.
\end{proof}

Figure \ref{fig:3.2} depicts that $\eta \mapsto x_{\ell,\eta,k}$, $k\in \mathbb{N}$ is an increasing function on $\R$ for fixed $\ell>-1/2$. Nevertheless, numerical simulations suggest that the first positive and negative zeros may not consistently increase over the entire real line when $\ell \in (-n-1/2,-n)$ for $n\in \mathbb{N}$.

\begin{figure}[!ht]
    \centering
    \includegraphics[width=0.495\textwidth, height=0.412\textwidth]{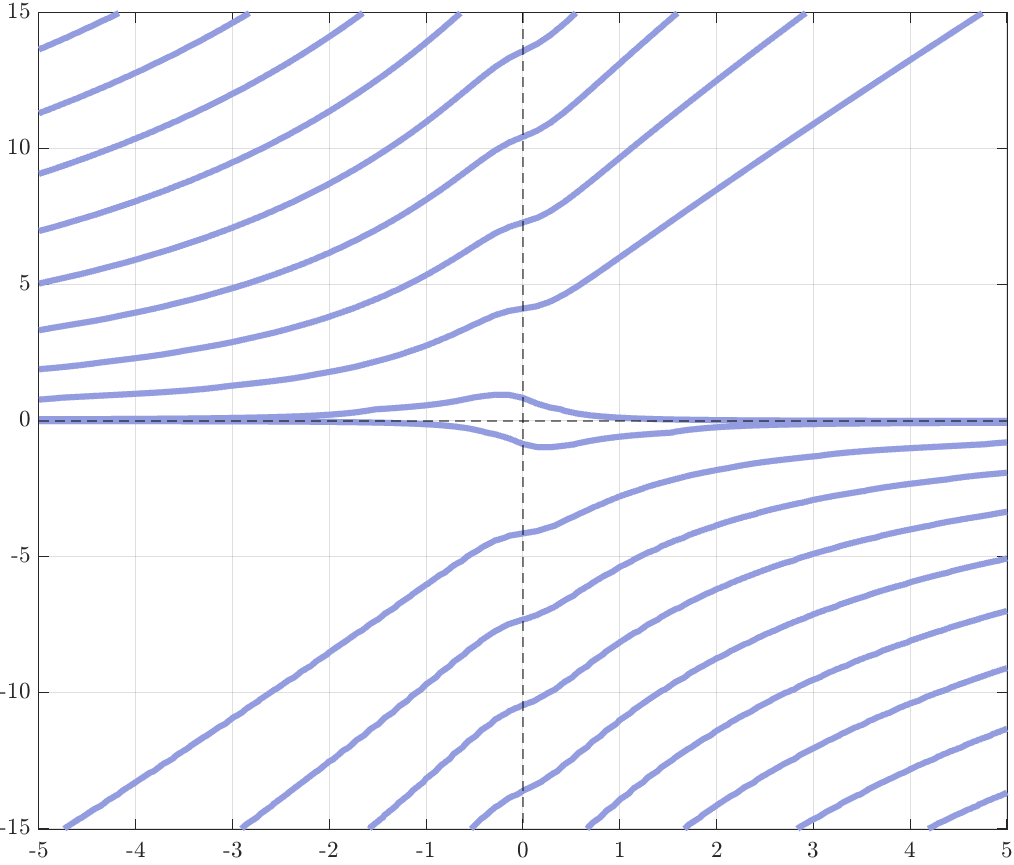}
    \includegraphics[width=0.495\textwidth, height=0.412\textwidth]{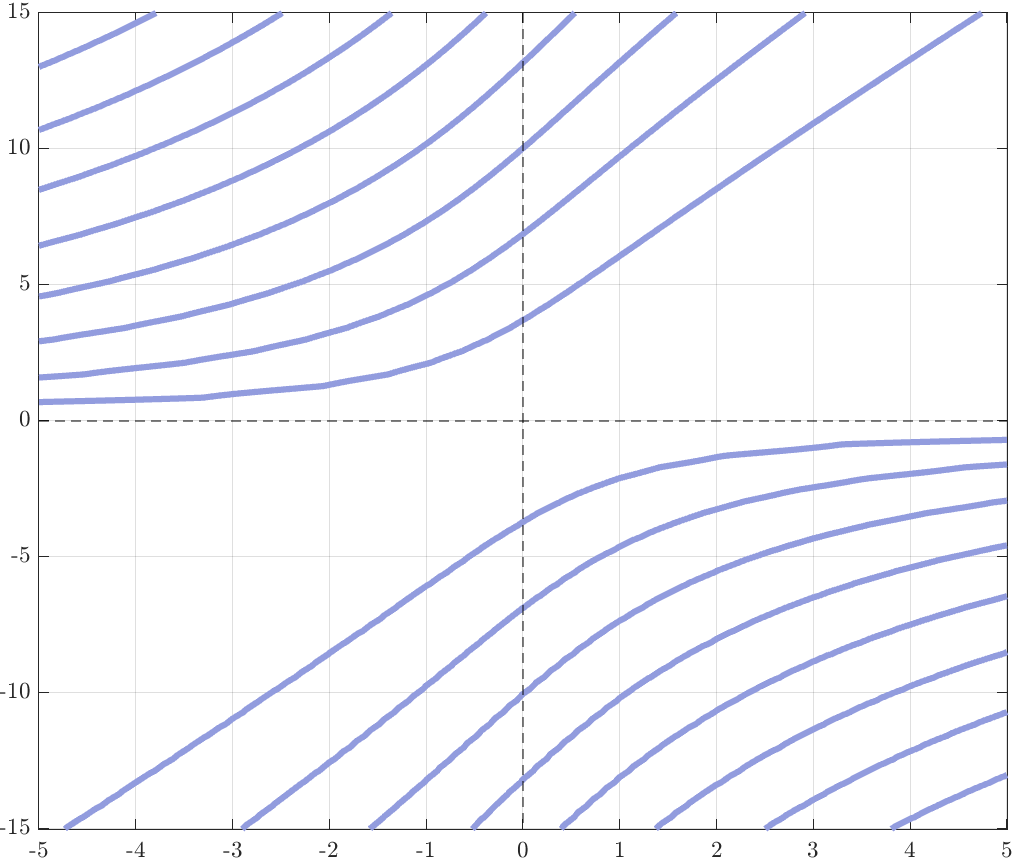}
    \caption{The trajectories of the zeros of $F_{\ell,\eta}$ in the $\rb{\eta,x}$-plane are depicted for $-5\le \eta \le 5$ and $-15\le x\le 15$.  The left plot corresponds to $\ell=-4/3$, while the right one corresponds to $\ell=1/5$.}
    \label{fig:3.2}
\end{figure}

\section{Orthogonal polynomials in Pad{\'e} approximants}

Wimp \cite{Wimp1985} introduced a sequence of orthogonal polynomials to serve as numerator and denominator polynomials in the Pad{\'e} approximant for the logarithmic derivative of the confluent hypergeometric function, specifically ${}_1F_1\rb{c/2+1 -i\kappa, c+2; i/x}$. To provide more clarity, a key outcome in the aforementioned work, corresponding to $c= 2\ell +2$ and $\kappa =\eta$, is as follows:
\begin{equation*}
    \lim_{n\to \infty} \frac{R_{n-1,\ell+1,\eta}(x)}{R_{n,\ell,\eta}(x)}   = \frac{\phi_{\ell+1,\eta}\rb{\frac{1}{2x}}}{x\phi_{\ell,\eta}\rb{\frac{1}{2x}}} = \int_{\text{supp}(\mu)} \frac{d\mu}{x-t},
\end{equation*}
where $\cb{R_{n,\ell,\eta}(x)}$ denotes a monic sequence of orthogonal polynomials for $\ell\in (-3/2,\infty)\setminus \cb{-1}$ and $\eta\ne0$, with respect to purely discrete measure $d\mu$, arising from the three-term recurrence relation
\begin{align}\label{poly1}
    y_{n+1}&=\rb{x-\alpha_n}y_{n}-\beta_n y_{n-1},
\end{align}
with
\begin{equation}\label{beta}
    \alpha_n = -\frac{\eta}{2(\ell+n+1)(\ell+n+2)},\quad
    \beta_n =  \frac{(\ell+n+1)^2 + \eta^2}{2(2\ell+2n+1)_3(\ell+n+1)}.
\end{equation}
The initial values of the above are given by
\begin{equation}\label{list}
    R_{0,\ell,\eta}(x) = 1,\quad R_{1,\ell,\eta}(x) = x- \alpha_{0}= x + \frac{\eta}{2(\ell+1)(\ell+2)}.
\end{equation}
Particularly noteworthy in \cite{Wimp1985} is the explicit expression of the form
\begin{equation}\label{explicit}
\begin{multlined}[c][.9\displaywidth]
    R_{n,\ell,\eta}(x) = \sum_{k=0}^n \frac{(-\ell-i\eta-n-1)_k i^{k} x^{n-k}}{k! (-2\ell-2n-2)_k} \\
    \cdot {}_3F_2\left[\begin{array}{c} -k,\, 2n+2\ell+3-k,\, i\eta+\ell+1\\
n+i\eta+\ell+2-k,\, 2\ell+2 \end{array}\biggr| \,1\right],
\end{multlined}
\end{equation}
which is useful in analyzing how those polynomials are related to the regular Coulomb wave function $F_{\ell,\eta}$.

Unaware of the Wimp's work, {\v S}tampach and {\v S}{\v t}ov{\' i}{\v c}ek \cite{StampachStovicek2014} also introduced an equivalent sequence of orthogonal polynomials in their study of the Jacobi operator.  Following the expressions in \cite{StampachStovicek2014}, the associated orthogonal polynomials satisfy the three term recurrence relation
\begin{equation*}
    x P_n(x) = w_{n-1}P_{n-1}(x) + \lambda_n P_n(x) + w_n P_{n+1}(x), \for n\in \mathbb{N},
\end{equation*}
which is equivalent to \eqref{poly1} under the choice of $\lambda_n = 2\alpha_n$ and $w_n = 2\sqrt{\beta_{n+1}}$ (see \cite[(2)]{StampachStovicek2014}). The solutions to the above recurrence relation with initial conditions $P_{-1}(x) =0 $ and $P_0(x)=1$ can be explicitly written as
\begin{equation}\label{PR1}
    P_n(x) = \frac{1}{\sqrt{\zeta_n}} R_{n,\ell,\eta}(x/2),\quad
    \zeta_n = \prod_{k=1}^n \beta_k >0.
\end{equation}
It is also important to mention that the Proposition 8 in \cite[(56-57)]{StampachStovicek2014} states that for $x\ne 0$,
\begin{equation}\label{limit3}
    \lim_{n\to \infty} (2x)^{n} R_{n,\ell,\eta}\rb{\frac{1}{2x}} = \phi_{\ell,\eta}(x).
\end{equation}
Moreover, by taking $\eta=0$ and $\ell = \nu-1/2$, it reduces to the Lommel polynomials (see \cite[\S 9.6]{Watson1922}) with the relation
\begin{equation*}
    2^{2n}(\nu+1)_n R_{n,\ell,\eta}\rb{\frac{1}{2x}} = R_{n,\nu+1}(x).
\end{equation*}

\begin{remark}\label{rem:5.1}
In fact, the limit \eqref{limit3} has uniform convergence on any compact subsets of $\mathbb{C}$. To justify,
we observe from \eqref{explicit} that
\begin{equation*}
    (2z)^{n} R_{n,\ell,\eta}\rb{\frac{1}{2x}} = \sum_{k=0}^n \sum_{m=0}^k A_{k,m}(n) B_{k,m} x^k,
\end{equation*}
where
\begin{equation*}
    A_{k,m}(n) = \frac{(-\ell-i\eta-n-1)_{k-m} }{(-2\ell-2n-2)_{k-m}},\quad B_{k,m} =  \frac{(-k)_m(i\eta + \ell +1)_m}{k!\, m! (2\ell +2)_m} (2i)^k.
\end{equation*}
Since $ \lim_{n\to \infty} A_{k,m}(n) = 2^{m-k}$, we find that $|A_{k,m}(n)| \le 1$ for $0\le m \le k$ and sufficiently large $n$.
On the other hand, one may easily verify that $2^{m-k} B_{k,m}$ satisfies the recurrence \eqref{series2} so that using \eqref{series1},
\begin{equation}\label{phi2}
    \phi_{\ell,\eta}(x) = \sum_{k=0}^\infty a_{\ell,\eta,k} x^{k} =\sum_{k=0}^\infty \sum_{m=0}^k 2^{m-k} B_{k,m} x^k.
\end{equation}
It is simple to see that $\limsup_{k\to \infty} \sum_{m=0}^{k+1} |B_{k+1,m}|\big/ \sum_{m=0}^{k} |B_{k,m}| =0 $ by using stirling's formula. Consequently, the series in \eqref{phi2} converges absolutely and uniformly on any compact subsets of $\mathbb{C}$.
Hence the result follows from the dominated convergence theorem (specifically, Tannery theorem).
\end{remark}

In order to further elaborate results on Theorem \ref{thm:B} and \cite[Open problem 3]{Baricz-C-D-T2016}, we may consider the polynomials associated with Dini-like function $x F_{\ell,\eta}'(x) +h F_{\ell,\eta}(x)$, $h\in \R$.
\begin{definition}
    Let $\eta,\ell\in \R$ with $\ell >-3/2$, $\ell\ne-1$ and $\eta\ne 0$, and let $H\in \R$. We define
    \begin{equation*}
    D_{n,\ell,\eta}(H; x) =\rb{\frac{\eta}{2(\ell+1)}+ H x} R_{n,\ell,\eta}(x) - \frac{1+\eta^2/(\ell+1)^2}{4(2\ell+3)} R_{n-1,\ell+1,\eta}(x).
    \end{equation*}
\end{definition}

Regarding \eqref{explicit}, it is clear that $D_{n,\ell,\eta}(H;x)$ is a polynomial of degree $(n+1)$ under the same conditions for $\eta,\ell$.

\begin{proposition}\label{prop:5.1}
Let $\eta,\ell\in \R$ with $\ell >-3/2$, $\ell\ne-1$ and $\eta\ne 0$. Then
\begin{equation*}
    \lim_{n\to\infty} (2x)^{n+1} D_{n,\ell,\eta}\rb{H; \frac{1}{2x}} = x\phi'_{\ell,\eta}(x) + H\phi_{\ell,\eta}(x)
\end{equation*}
for each $x\in \R$. Moreover, the convergence is uniform on any compact subset of $\R$.
\end{proposition}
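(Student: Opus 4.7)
The plan is to insert $y=1/(2x)$ into the defining formula for $D_{n,\ell,\eta}(H;y)$, multiply through by $(2x)^{n+1}$, and match the resulting pieces against the rescaled polynomials in \eqref{limit3}. A direct rearrangement yields
\begin{equation*}
(2x)^{n+1} D_{n,\ell,\eta}\!\left(H;\tfrac{1}{2x}\right) = \left(\frac{\eta x}{\ell+1}+H\right)(2x)^{n} R_{n,\ell,\eta}\!\left(\tfrac{1}{2x}\right) - \frac{1+\eta^{2}/(\ell+1)^{2}}{2\ell+3}\, x^{2}\,(2x)^{n-1} R_{n-1,\ell+1,\eta}\!\left(\tfrac{1}{2x}\right).
\end{equation*}
Both rescaled polynomials fall under \eqref{limit3}: the first tends to $\phi_{\ell,\eta}(x)$, and the second, after the index shift, tends to $\phi_{\ell+1,\eta}(x)$ (valid since $\ell+1 \in (-1/2,\infty)\setminus\{-1\}$ under the standing hypotheses $\ell > -3/2$, $\ell\neq -1$). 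By Remark \ref{rem:5.1} both convergences are uniform on compact subsets of $\mathbb{C}$, so multiplying by the continuous polynomial prefactors $\eta x/(\ell+1)+H$ and $-x^{2}(1+\eta^{2}/(\ell+1)^{2})/(2\ell+3)$ preserves uniform convergence on compact subsets of $\R$.

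It remains to identify the pointwise limit with $x\phi'_{\ell,\eta}(x)+H\phi_{\ell,\eta}(x)$. The essential ingredient is the differentiation identity
\begin{equation*}
x\,\phi'_{\ell,\eta}(x) = \frac{\eta x}{\ell+1}\,\phi_{\ell,\eta}(x) - \frac{1+\eta^{2}/(\ell+1)^{2}}{2\ell+3}\, x^{2}\, \phi_{\ell+1,\eta}(x),
\end{equation*}
which I would derive from the first line of \eqref{recur1} by substituting the factorization $F_{\ell,\eta}(x)=C_{\ell,\eta}\,x^{\ell+1}\phi_{\ell,\eta}(x)$ (together with its $\ell+1$ analogue), dividing through by $C_{\ell,\eta}\, x^{\ell+1}$, and simplifying via the Gamow-constant ratio
\begin{equation*}
\frac{C_{\ell+1,\eta}}{C_{\ell,\eta}} = \frac{\sqrt{(\ell+1)^{2}+\eta^{2}}}{(\ell+1)(2\ell+3)},
\end{equation*}
read off directly from \eqref{Gamow}. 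Adding $H\phi_{\ell,\eta}(x)$ to both sides of this identity reproduces exactly the limit obtained in the previous paragraph, completing the proof.

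The only nontrivial step is the derivation of the differentiation identity; once it is in hand, the coefficients in the definition of $D_{n,\ell,\eta}$ are seen to have been chosen precisely to produce the rearrangement above. Two minor book-keeping points are worth recording: first, $(2x)^{n+1}D_{n,\ell,\eta}(H;1/(2x))$ is in fact a polynomial in $x$ of degree $n+1$ (since $R_{n,\ell,\eta}$ is monic of degree $n$, the apparent negative powers of $x$ cancel), so no artificial singularity arises at the origin and compact sets containing $x=0$ are legitimate; second, the requirement $\ell+1 \neq -1$ needed to invoke \eqref{limit3} on the second factor is automatic from $\ell > -3/2$.
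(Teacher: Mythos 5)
Your proposal is correct and follows essentially the same route as the paper: rearrange $(2x)^{n+1}D_{n,\ell,\eta}(H;1/(2x))$ into the two rescaled polynomial pieces, pass to the limit termwise via \eqref{limit3} and Remark \ref{rem:5.1}, and identify the limit through the identity $x\phi'_{\ell,\eta}(x)=\frac{\eta x}{\ell+1}\phi_{\ell,\eta}(x)-\frac{1+\eta^2/(\ell+1)^2}{2\ell+3}x^2\phi_{\ell+1,\eta}(x)$, which the paper likewise obtains by reformulating \eqref{recur1}. Your extra care at $x=0$ and with the Gamow-constant ratio only makes explicit what the paper leaves implicit.
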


\begin{proof}
We begin by reformulating \eqref{recur1} as, for $\ell >-3/2$ and $\ell\ne-1$,
\begin{equation*}
    \phi_{\ell,\eta}'(x) = \frac{\eta}{\ell+1}\phi_{\ell,\eta}(x) - \frac{x}{2\ell+3}\rb{ 1+\frac{\eta^2}{(\ell+1)^2} }\phi_{\ell+1,\eta}(x).
\end{equation*}
A simple consequence of \eqref{limit3} shows that
\begin{align*}
    \lim_{n\to\infty} &(2x)^{n+1} D_{n,\ell,\eta}\rb{H; \frac{1}{2x}} \\
    &= \rb{\frac{\eta}{\ell+1}x + H}\phi_{\ell,\eta}(x) -\frac{1+\eta^2/(\ell+1)^2}{2\ell+3} x^2 \phi_{\ell+1,\eta}(x)\\
    &= x\phi'_{\ell,\eta}(x) + H\phi_{\ell,\eta}(x)
\end{align*}
for each $x\ne 0$. In view of expressions \eqref{explicit} and \eqref{phi}, $x^n R_{n,\ell,\eta}(1/(2x))$ is well-defined at $x=0$, and moreover, it follows that
\begin{equation*}
    (2x)^{n+1} D_{n,\ell,\eta}\rb{H; \frac{1}{2x}} = H = x\phi'_{\ell,\eta}(x) + H\phi_{\ell,\eta}(x)
\end{equation*}
at $x=0$. Since the convergence follows directly from Remark \ref{rem:5.1}, the proof is complete.
\end{proof}

\begin{theorem}\label{thm:5.1}
Let $\eta,\ell\in \R$ with $\ell >-3/2$, $\ell\ne-1$ and $\eta\ne 0$, and let $n\in \mathbb{N}$. If $H\ge 0$, the zeros of polynomials $R_{n,\ell,\eta}(x)$ and $D_{n,\ell,\eta}(H;x)$ are all simple and real, and those zeros are interlaced each other.
\end{theorem}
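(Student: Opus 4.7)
The plan is to factor $D_{n,\ell,\eta}(H;x)$ through a strictly monotone rational function and then count sign changes on the $n+1$ intervals determined by the zeros of $R_{n,\ell,\eta}$. A direct substitution in \eqref{beta} shows that $\alpha_n(\ell+1) = \alpha_{n+1}(\ell)$ and $\beta_n(\ell+1) = \beta_{n+1}(\ell)$, so $R_{n,\ell+1,\eta}$ is generated by the once-shifted three-term recurrence of $R_{n,\ell,\eta}$; this identifies $R_{n-1,\ell+1,\eta}$ as the numerator (second-kind) polynomial associated with $R_{n,\ell,\eta}$. Under the standing hypothesis $\ell > -3/2$, $\ell \ne -1$, we have $\beta_k(\ell) > 0$ for every $k \ge 1$, so Favard's theorem provides $n$ simple real zeros $x_1 < \cdots < x_n$ of $R_{n,\ell,\eta}$, and the classical interlacing between a monic orthogonal polynomial and its second-kind companion (cf.\ Chihara's monograph on orthogonal polynomials) gives $n-1$ simple real zeros $y_1 < \cdots < y_{n-1}$ of $R_{n-1,\ell+1,\eta}$ with $x_k < y_k < x_{k+1}$.

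Setting $c_\ell = (1 + \eta^2/(\ell+1)^2)/(4(2\ell+3)) > 0$, I factor
$$D_{n,\ell,\eta}(H;x) = R_{n,\ell,\eta}(x)\, g_H(x), \qquad g_H(x) = \frac{\eta}{2(\ell+1)} + Hx - c_\ell\, r(x),$$
where $r(x) = R_{n-1,\ell+1,\eta}(x)/R_{n,\ell,\eta}(x)$. By the monic normalization and the interlacing above, $R_{n,\ell,\eta}'(x_k)$ and $R_{n-1,\ell+1,\eta}(x_k)$ share the sign $(-1)^{n-k}$, so every residue in the partial fraction expansion $r(x) = \sum_{k=1}^n A_k/(x - x_k)$ is positive. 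Consequently $r'(x) = -\sum_k A_k/(x-x_k)^2 < 0$ and $g_H'(x) = H - c_\ell r'(x) > 0$ on each interval of continuity, making $g_H$ strictly increasing there.

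The intermediate value theorem, applied on the $n+1$ intervals $(-\infty, x_1)$, $(x_k, x_{k+1})$ for $1 \le k \le n-1$, and $(x_n, \infty)$, then finishes the proof. The singular term $-c_\ell r$ forces $g_H$ to sweep from $-\infty$ to $+\infty$ on each interior interval, contributing $n-1$ zeros. For $H > 0$, the linear term $Hx$ makes $g_H(x) \to \pm\infty$ as $x \to \pm\infty$, so each exterior interval contributes exactly one zero, yielding $n+1$ simple real zeros of $D_{n,\ell,\eta}(H;x)$ that match its degree and strictly interlace with $x_1,\dots,x_n$. For $H = 0$, $g_H$ tends to the finite nonzero limit $\eta/(2(\ell+1))$ at both $\pm\infty$, so exactly one exterior interval supplies a zero (selected by the sign of $\eta/(\ell+1)$), again producing the correct count of $n$ zeros and strict interlacing. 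The main obstacle is identifying $R_{n-1,\ell+1,\eta}$ cleanly with the second-kind companion of $R_{n,\ell,\eta}$ and invoking the corresponding interlacing theorem; once this is secured, the monotonicity of $g_H$ forced by $H \ge 0$ and $c_\ell > 0$ together with the $n+1$-interval sign count is routine.
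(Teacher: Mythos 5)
Your argument is correct, and it shares the paper's overall skeleton --- both proofs reduce the theorem to showing that the rational function $D_{n,\ell,\eta}(H;x)/R_{n,\ell,\eta}(x)$ is strictly increasing between consecutive zeros of $R_{n,\ell,\eta}$ and then count zeros on the resulting intervals --- but you establish the crucial positivity by a genuinely different lemma. The paper proves $W[R_{n-1,\ell+1,\eta},R_{n,\ell,\eta}](x)>0$ by manipulating a product identity of {\v S}tampach and {\v S}{\v t}ov{\'\i}{\v c}ek into the telescoped sum-of-squares formula (its equation for $W[R_{n,\ell,\eta},R_{n+1,\ell-1,\eta}]$ as a positive combination of the $R_{n-k,\ell,\eta}^2$). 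You instead observe from the explicit coefficients that $\alpha_n(\ell+1)=\alpha_{n+1}(\ell)$ and $\beta_n(\ell+1)=\beta_{n+1}(\ell)$, so that $R_{n-1,\ell+1,\eta}$ is exactly the first associated (second-kind) polynomial of $R_{n,\ell,\eta}$; the classical interlacing of an orthogonal polynomial with its second-kind companion then gives the partial-fraction expansion $R_{n-1,\ell+1,\eta}/R_{n,\ell,\eta}=\sum_k A_k/(x-x_k)$ with $A_k>0$, which is equivalent to the Wronskian positivity since $-\bigl(R_{n-1,\ell+1,\eta}/R_{n,\ell,\eta}\bigr)'=W[R_{n-1,\ell+1,\eta},R_{n,\ell,\eta}]/R_{n,\ell,\eta}^2$. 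Your route is more conceptual and explains \emph{why} the Wronskian is positive (it is the Markov/Gauss-quadrature positivity of residues), at the cost of importing the standard associated-polynomial interlacing theorem; the paper's route is more computational but self-contained modulo the cited identity. A further point in your favor: you treat $H=0$ honestly, noting that $D_{n,\ell,\eta}(0;\cdot)$ has degree $n$ (since $\eta\ne0$, $\ell\ne-1$) and that exactly one exterior interval then contributes a zero, whereas the paper's final step (one zero in each of $(-\infty,r_{\min})$ and $(r_{\max},\infty)$, relying on linear growth of the ratio) implicitly assumes $H>0$ and its preceding claim that $D_{n,\ell,\eta}(H;\cdot)$ always has degree $n+1$ fails at $H=0$.
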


\begin{proof}
    Owing to \cite[Theorem 2.2.3]{Ismail2005}, we find that all zeros of $R_{n,\ell,\eta}(x)$ are real and simple for given $\ell >-3/2$, $\ell\ne -1$, and $\eta \ne 0$. We observe that
    \begin{equation}\label{mero1}
        \frac{d}{dx}\frac{D_{n,\ell,\eta}(H; x)}{ R_{n,\ell,\eta}(x)} = H + \frac{1+\eta^2/(\ell+1)^2}{4(2\ell+3)} \frac{W[R_{n-1,\ell+1,\eta},R_{n,\ell,\eta}](x)}{R_{n,\ell,\eta}^2(x)}
    \end{equation}
    Using \eqref{PR1}, we rephrase the formula \cite[p. 248]{StampachStovicek2014} as
    \begin{equation*}
        \frac{R_{m,\ell-1,\eta}(x)R_{m+s,\ell,\eta}(x)}{\sqrt{\zeta_{m}}\sqrt{\zeta_{m+s}}} - \frac{R_{m+s+1,\ell-1,\eta}(x)R_{m-1,\ell,\eta}(x)}{\sqrt{\zeta_{m+s+1}}\sqrt{\zeta_{m-1}}} =  \frac{R_{s,\ell+m,\eta}(x)}{\sqrt{\beta_m}} 
    \end{equation*}
    for $m,s\in \mathbb{Z}_+$, where $\beta_m>0$ and $\zeta_m>0$ are presented in \eqref{beta} and \eqref{PR1} respectively. If we set $s=n-1$ and $m=1$, we have
    \begin{equation*}
        \frac{R_{1,\ell-1,\eta}(x)R_{n,\ell,\eta}(x)}{\sqrt{\zeta_{1}}\sqrt{\zeta_{n}}} - \frac{R_{n+1,\ell-1,\eta}(x)R_{0,\ell,\eta}(x)}{\sqrt{\zeta_{n+1}}\sqrt{\zeta_{0}}} =  \frac{R_{n-1,\ell+1,\eta}(x)}{\sqrt{\beta_1}} 
    \end{equation*}
    \begin{equation*}
        \frac{1}{\sqrt{\zeta_{n}}}R_{1,\ell-1,\eta}(x)R_{n,\ell,\eta}(x) - \frac{\sqrt{\beta_1}}{\sqrt{\zeta_{n+1}}}R_{n+1,\ell-1,\eta}(x) =  R_{n-1,\ell+1,\eta}(x)
    \end{equation*}
    If we differentiate both sides after dividing by $\sqrt{\zeta_{n+1}} R_{n,\ell,\eta}(x)/ \sqrt{\beta_1}$, it is simple to deduce
    \begin{equation*}
        W [ R_{n,\ell,\eta}, R_{n+1,\ell-1,\eta} ](x) = \sqrt{ \frac{\zeta_{n+1}}{\beta_1} } W [ R_{n-1,\ell+1,\eta}, R_{n,\ell,\eta} ](x) + \sqrt{ \frac{\beta_{n+1}}{\beta_1} } R_{n,\ell,\eta}^2(x),
    \end{equation*}
    which implies that
    \begin{equation}\label{mero2}
        W [ R_{n,\ell,\eta}, R_{n+1,\ell-1,\eta} ](x) = \sum_{k=0}^n \rb{\frac{ \beta_{n+1-k}\prod_{j=1}^k \zeta_{n+1-j}  }{ \beta_1^{k+1}}}^{1/2} R_{n-k,\ell,\eta}^2(x)>0.
    \end{equation}
    Hence if $H \ge 0$, then the rational function $D_{n,\ell,\eta}(H,x)/R_{n,\ell,\eta}(x)$ is strictly increasing on each of subintervals of $\R$, partitioned by the zeros of $R_{n,\ell,\eta}(x)$. Consequently $D_{n,\ell,\eta}(H,x)$ has one and only one zero between two consecutive zeros of $R_{n,\ell,\eta}(x)$. Moreover, since
    \begin{equation*}
        \frac{D_{n,\ell,\eta}(H; x)}{ R_{n,\ell,\eta}(x)} = 2H x + O(1) \quad\text{as }|x| \to \infty,
    \end{equation*}
    $D_{n,\ell,\eta}(H,x)$ has one and only one zero on each of intervals $(-\infty,r_{\min})$ and $(r_{\max},\infty)$, where $r_{\min}$ and $r_{\max}$ denote respectively the smallest and largest zeros of $R_{n,\ell,\eta}(x)$. Therefore, $D_{n,\ell,\eta}(H,x)$ has $(n+1)$ zeros on the real line, and those zeros are interlaced with the zeros of $R_{n,\ell,\eta}(x)$, which means that it has only simple real zeros since $D_{n,\ell,\eta}(H,x)$ is a polynomial of degree $(n+1)$.
\end{proof}

In a conventional manner, by applying another Hurwitz's theorem (see for instance \cite[p. 152]{Conway1978}) along with Proposition \ref{prop:5.1}, we conclude that the function $x\phi'_{\ell,\eta}(x) + H \phi_{\ell,\eta}(x)$ has only real zeros if $H\ge 0$. In particular, it is also a simple application of Hadamard expansion \eqref{Hadamard}, which can be stated as:

\begin{theorem}
    Let $\eta,\ell\in \R$ with $\ell >-3/2$, $\ell\ne-1$ and $\eta\ne 0$.
    If $h \ge -\ell-1$, $xF_{\ell,\eta}'(x)+h F_{\ell,\eta}(x)$ has only real zeros. 
\end{theorem}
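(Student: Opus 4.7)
The plan is to reduce the statement to the Hurwitz-type argument already set up around Theorem \ref{thm:5.1}. First I would rewrite the target function in terms of $\phi_{\ell,\eta}$: using \eqref{structure}, a direct differentiation yields
\begin{equation*}
    x F_{\ell,\eta}'(x) + h F_{\ell,\eta}(x) = C_{\ell,\eta}\, x^{\ell+1} \rb{ x\phi_{\ell,\eta}'(x) + (\ell+1+h)\phi_{\ell,\eta}(x) }.
\end{equation*}
Setting $H:=\ell+1+h$, the hypothesis $h\ge-\ell-1$ becomes $H\ge 0$, and since $x^{\ell+1}$ contributes only a zero at the origin, the problem reduces to showing that the entire function $F(z):=z\phi_{\ell,\eta}'(z)+H\phi_{\ell,\eta}(z)$ has only real zeros whenever $H\ge 0$.

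Next I would import the polynomial approximation already developed in Section 5. Define
\begin{equation*}
    Q_n(z) := (2z)^{n+1} D_{n,\ell,\eta}\rb{H;\,1/(2z)}.
\end{equation*}
By Proposition \ref{prop:5.1}, combined with the observation in Remark \ref{rem:5.1} that the relevant power series converges absolutely and uniformly on compact subsets of $\mathbb{C}$, one has $Q_n(z)\to F(z)$ uniformly on compact subsets of $\mathbb{C}$. An elementary algebraic check shows that $Q_n$ is a polynomial in $z$ of degree at most $n+1$ whose nonzero zeros are precisely the reciprocals $1/(2\xi)$ of the zeros $\xi$ of $D_{n,\ell,\eta}(H;\cdot)$; by Theorem \ref{thm:5.1}, all such $\xi$ are real when $H\ge 0$, so $Q_n$ has only real zeros.

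The final step is a standard application of Hurwitz's theorem. A short argument rules out $F\equiv 0$: if $zf'\equiv -Hf$ with $f=\phi_{\ell,\eta}$, then $f(z)=Cz^{-H}$, which contradicts $f(0)=1$ (or, if $H=0$, contradicts the fact that $\phi_{\ell,\eta}$ is nonconstant). Given any $z_0\in \mathbb{C}\setminus \R$, pick an open disk $B\subset \mathbb{C}\setminus \R$ around $z_0$; each $Q_n$ is nonvanishing on $B$, so by Hurwitz's theorem \cite[p.~152]{Conway1978} the limit $F$ is nonvanishing on $B$, and in particular $F(z_0)\ne 0$. This shows that every zero of $F$ is real.

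The main obstacle is bookkeeping rather than conceptual. One must confirm that Proposition \ref{prop:5.1} actually delivers uniform convergence on compact subsets of $\mathbb{C}$ (and not merely of $\R$ as stated) and that $Q_n$ is genuinely a polynomial of degree $\le n+1$ whose nonzero roots are reciprocals of the real zeros of $D_{n,\ell,\eta}(H;\cdot)$. Both points follow directly from the absolute-uniform series convergence established in Remark \ref{rem:5.1} together with the explicit form \eqref{explicit} of $R_{n,\ell,\eta}$ underlying the definition of $D_{n,\ell,\eta}$.
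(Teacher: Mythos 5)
Your argument is correct, but it is not the proof the paper writes down; it is precisely the alternative route that the paper gestures at in the sentence immediately preceding the theorem (``by applying another Hurwitz's theorem \dots along with Proposition \ref{prop:5.1}'') and then declines to carry out. The paper's own proof is shorter and self-contained: it takes the logarithmic derivative of the Hadamard product \eqref{Hadamard} to obtain the Mittag--Leffler expansion \eqref{ML2}, factors $x\phi_{\ell,\eta}'(x)+H\phi_{\ell,\eta}(x)$ as $x\phi_{\ell,\eta}(x)$ times $H/x+\eta/(\ell+1)+\sum_{k\ge1}x/\big(x_{\ell,\eta,k}(x-x_{\ell,\eta,k})\big)$, notes that a nonzero zero $\alpha$ of the left-hand side cannot be a zero of $\phi_{\ell,\eta}$ (since $\phi_{\ell,\eta}$ and $\phi_{\ell,\eta}'$ share no zeros), and takes imaginary parts at $\alpha$ to force $\text{Im}(\alpha)=0$ whenever $H\ge0$; the passage to $xF_{\ell,\eta}'+hF_{\ell,\eta}$ is then \eqref{relation}, exactly as in your first display. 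Your route instead goes through the polynomials $D_{n,\ell,\eta}$, Theorem \ref{thm:5.1}, and Hurwitz's theorem, and the two points you flag as needing care are genuine but unproblematic: Remark \ref{rem:5.1} does give uniform convergence of $(2z)^{n}R_{n,\ell,\eta}(1/(2z))\to\phi_{\ell,\eta}(z)$ on compact subsets of $\mathbb{C}$, and since $(2z)^{n+1}D_{n,\ell,\eta}\big(H;1/(2z)\big)$ is a fixed linear combination of $\big(\eta z/(\ell+1)+H\big)(2z)^{n}R_{n,\ell,\eta}(1/(2z))$ and $z^{2}(2z)^{n-1}R_{n-1,\ell+1,\eta}(1/(2z))$, the same holds for your $Q_n$; moreover $Q_n(0)=H$, so $Q_n$ has only real zeros (possibly including the origin when $H=0$) and is indeed nonvanishing off $\R$. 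In terms of what each approach buys: the Hadamard-expansion proof needs only the membership of $\phi_{\ell,\eta}$ in $\mathcal{LP}$ and runs in a few lines, whereas your Hurwitz argument leans on the full orthogonal-polynomial apparatus of Section 5 but realizes the zeros as limits of the real zeros of the Pad\'e-type polynomials --- which is exactly the viewpoint the paper then exploits to prove the interlacing statement of Theorem \ref{thm:5.3}.
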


\begin{proof}
We observe that by taking logarithmic derivative on \eqref{Hadamard}, 
\begin{equation}\label{ML2}
    \frac{\phi_{\ell,\eta}'(x)}{\phi_{\ell,\eta}(x)} = \frac{\eta}{\ell+1} + \sum_{k=1}^\infty \frac{x}{x_{\ell,\eta,k} (x- x_{\ell,\eta,k})}.
\end{equation}
Then we write
\begin{equation*}
    x\phi_{\ell,\eta}'(x) + H\phi_{\ell,\eta}(x) = x\phi_{\ell,\eta}(x)\rb{ \frac{H}{x}+ \frac{\eta}{\ell+1} + \sum_{k=1}^\infty \frac{x}{x_{\ell,\eta,k} (x- x_{\ell,\eta,k})}}.
\end{equation*}
Since $\phi_{\ell,\eta}$ and $\phi_{\ell,\eta}'$ do not share zeros in common (If so, we deduce from \eqref{ode2} that $\phi_{\ell,\eta}^{(n)}$ vanishes for all $n\ge 2$, which is a contradiction), it follows that for any zero $\alpha\in \mathbb{C}\setminus\cb{0}$ of $x\phi_{\ell,\eta}'(x) + H\phi_{\ell,\eta}(x)$, it satisfies
\begin{equation*}
    \frac{H}{\alpha}+ \frac{\eta}{\ell+1} + \sum_{k=1}^\infty \frac{\alpha}{x_{\ell,\eta,k} (\alpha- x_{\ell,\eta,k})} =0.
\end{equation*}
Thus we obtain
\begin{equation*}
    -\rb{\frac{H}{|\alpha|^2}+ \sum_{k=1}^\infty \frac{1}{|\alpha- x_{\ell,\eta,k}|^2} }\text{Im}(\alpha) =0,
\end{equation*}
which implies that $\text{Im}(\alpha) =0$ if $H \ge 0$. By considering
\begin{equation}\label{relation}
    x F_{\ell,\eta}'(x) + h F_{\ell,\eta}(x) = C_{\ell,\eta} x^{\ell+1} \rb{ x\phi_{\ell,\eta}'(x) + (h+\ell+1)\phi_{\ell,\eta}(x) },
\end{equation}
the zeros of $x F_{\ell,\eta}'(x) + h F_{\ell,\eta}(x)$ are all real if $h + \ell+1 \ge 0$.

\end{proof}

Based on the results discussed earlier, we extend Theorem \ref{thm:B} as follows:
\begin{theorem}\label{thm:5.3}
    Let $\eta,\ell\in \R$ with $\ell >-3/2$, $\ell\ne-1$ and $\eta\ne 0$. Then the following hold true:
    \begin{enumerate}[label={\rm(\roman*)}]
        \item The zeros of $x\phi_{\ell,\eta}(x)$ and $x\phi_{\ell,\eta}'(x) + H\phi_{\ell,\eta}(x)$ are interlaced, if $H > 0$.
        \item The zeros of $\phi_{\ell,\eta}(x)$ and $\phi_{\ell,\eta}'(x) $ are interlaced.
    \end{enumerate}
\end{theorem}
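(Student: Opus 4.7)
The plan is to adapt the Mittag-Leffler / logarithmic-derivative argument used in the preceding theorem, sharpening it via monotonicity. Consider the meromorphic ratio
\begin{equation*}
    f_H(x) = \frac{x\phi'_{\ell,\eta}(x) + H\phi_{\ell,\eta}(x)}{x\phi_{\ell,\eta}(x)} = \frac{H}{x} + \frac{\phi'_{\ell,\eta}(x)}{\phi_{\ell,\eta}(x)},
\end{equation*}
whose numerator and denominator are exactly the two functions whose zeros I wish to interlace. By \eqref{ML2} this admits the expansion
\begin{equation*}
    f_H(x) = \frac{H}{x} + \frac{\eta}{\ell+1} + \sum_{k=1}^\infty \frac{x}{x_{\ell,\eta,k}(x - x_{\ell,\eta,k})}.
\end{equation*}

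The crux is that termwise differentiation of this series (justified on compact sets away from the poles, since the differentiated series $\sum (x-x_{\ell,\eta,k})^{-2}$ converges locally uniformly there) yields
\begin{equation*}
    f_H'(x) = -\frac{H}{x^2} - \sum_{k=1}^\infty \frac{1}{(x - x_{\ell,\eta,k})^2} < 0
\end{equation*}
whenever $H \ge 0$, so $f_H$ is strictly decreasing on every component of its domain. For part (i), with $H > 0$, the poles of $f_H$ are precisely the zeros of $x\phi_{\ell,\eta}(x)$: the origin (simple, with residue $H$) together with the $x_{\ell,\eta,k}$, which are simple by Remark \ref{rem:3.1}. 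On each maximal pole-free open interval of $\R$, $f_H$ then decreases strictly from $+\infty$ to $-\infty$, and therefore vanishes exactly once. Since these intervals tile $\R$ and neither $0$ nor any $x_{\ell,\eta,k}$ is itself a zero of $x\phi'_{\ell,\eta}(x) + H\phi_{\ell,\eta}(x)$ (at $x_{\ell,\eta,k}$ that expression equals $x_{\ell,\eta,k}\phi'_{\ell,\eta}(x_{\ell,\eta,k})\ne 0$, and at $0$ it equals $H\ne 0$), the one real zero of the numerator produced per interval yields exactly the interlacing statement of (i).

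Part (ii) is the specialisation $H = 0$: now $f_0 = \phi'_{\ell,\eta}/\phi_{\ell,\eta}$ is regular at $x=0$, since $\phi_{\ell,\eta}(0)=1$, so the pole set collapses to $\{x_{\ell,\eta,k}\}_k$ and the maximal pole-free intervals are simply the components of $\R \setminus \{x_{\ell,\eta,k}\}_k$ (in particular the component containing the origin is a single interval). The same monotonicity argument on each such component produces exactly one zero of $\phi'_{\ell,\eta}$ between consecutive zeros of $\phi_{\ell,\eta}$, which is the desired interlacing. The main (and only mild) obstacle is the term-by-term differentiation of the Mittag-Leffler expansion and the sign analysis of $f_H$ immediately on either side of each simple pole; both are routine local computations, but they are what underpin the whole argument.
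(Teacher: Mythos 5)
Your proof is correct, but it reaches the monotonicity of the key meromorphic function by a genuinely different route from the paper. You write $f_H(x)=H/x+\phi_{\ell,\eta}'(x)/\phi_{\ell,\eta}(x)$, invoke the Mittag--Leffler expansion \eqref{ML2} (itself a consequence of the Hadamard product \eqref{Hadamard}), and differentiate term by term to get the explicit formula $f_H'(x)=-H/x^2-\sum_{k\ge1}(x-x_{\ell,\eta,k})^{-2}<0$; the interlacing then follows from strict decrease together with the fact that every pole is simple with positive residue, so $f_H$ runs from $+\infty$ to $-\infty$ on each maximal pole-free interval. The paper instead obtains the same monotonicity of $Q=f_H$ as a limit of the Pad\'e-type rational functions $D_{n,\ell,\eta}(H;\cdot)/R_{n,\ell,\eta}(\cdot)$, using the Wronskian identities \eqref{mero1}--\eqref{mero2} for the orthogonal polynomials together with the uniform convergence of Proposition \ref{prop:5.1} and Remark \ref{rem:5.1}. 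Your argument is more self-contained and elementary (it is essentially the same device the paper uses in Section 3 via \eqref{ML1}--\eqref{Wrons1}), and it yields a quantitative lower bound on $-f_H'$; the paper's route buys the finite-$n$ statement of Theorem \ref{thm:5.1} along the way and exhibits the interlacing as a limit of the polynomial interlacing. Two small points worth making explicit in your write-up: the local uniform convergence of $\sum_k(x-x_{\ell,\eta,k})^{-2}$, which licenses the termwise differentiation, follows from $\sum_k x_{\ell,\eta,k}^{-2}<\infty$ (genus one of $\phi_{\ell,\eta}$); and the maximal pole-free intervals are all bounded because the real zeros accumulate only at $\pm\infty$, so the count of exactly one zero per interval exhausts all real zeros of the numerator.
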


\begin{proof}
    Let us consider the meromorphic function
    \begin{equation*}
        Q(x) = \frac{x\phi'_{\ell,\eta}(x) + H \phi_{\ell,\eta}(x) }{ x\phi_{\ell,\eta}(x) }.
    \end{equation*}
    On taking advantage of the limits \eqref{limit3}, Proposition \ref{prop:5.1} and Remark \ref{rem:5.1}, we find that for each $x\in \R \setminus \big\{ x_{\ell,\eta,k}\big\}_{k=1}^\infty \cup \cb{0}$,
    \begin{equation*}
        \lim_{n\to\infty }\frac{ D_{n,\ell,\eta}\rb{H; \frac{1}{2x} }}{ R_{n,\ell,\eta}\rb{ \frac{1}{2x}}} = Q\rb{x}
    \end{equation*}
    As readily verified, by using \eqref{mero1} and \eqref{mero2}, the function $Q(x)$ is decreasing on each subintervals of $\R$ partitioned by its poles $\big\{ x_{\ell,\eta,k}\big\}_{k=1}^\infty \cup \cb{0}$. In this regard, the zeros of $Q(x)$ are interlaced with its poles, provided that $H \ge0$. In particular case of $H=0$, The zeros of $\phi_{\ell,\eta}(x)$ and $\phi_{\ell,\eta}'(x) $ are interlaced to each other since $x$ will be canceled in both numerator and denominator.
\end{proof}

As for the corresponding statement involving $F_{\ell,\eta}(x)$ and $xF_{\ell,\eta}'(x) + h F_{\ell,\eta}(x)$, it can be rewritten by using \eqref{relation}.
Additionally, an analogue of Lemma \ref{lem:2.2} for $F_{\ell,\eta}$ follows immediately from the above theorem.
\begin{corollary}
 Let $\eta,\ell\in \R$ with $\ell >-1$ and $\eta\ne 0$.
The positive zeros of $F_{\ell,\eta}$ are bounded below by $\eta + \sqrt{\eta^2+(\ell+1)^2}>0$.
\end{corollary}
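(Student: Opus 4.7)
The plan is to combine Theorem~\ref{thm:5.3}(i) with Lemma~\ref{lem:2.2}. Because $\ell>-1$, the choice $H=\ell+1>0$ is admissible in Theorem~\ref{thm:5.3}(i), which then yields that the zeros of $x\phi_{\ell,\eta}(x)$ and of $x\phi_{\ell,\eta}'(x)+(\ell+1)\phi_{\ell,\eta}(x)$ interlace. Via the identity $F_{\ell,\eta}'(x)=C_{\ell,\eta}\,x^{\ell}\bigl(x\phi_{\ell,\eta}'(x)+(\ell+1)\phi_{\ell,\eta}(x)\bigr)$, the positive zeros of the second function coincide with the positive zeros $\{\beta_k\}_{k\ge1}$ of $F_{\ell,\eta}'$. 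Denoting by $\rho_{\ell,\eta,1}$ the smallest positive zero of $F_{\ell,\eta}$, if I can establish $\beta_1<\rho_{\ell,\eta,1}$, then Lemma~\ref{lem:2.2} (whose hypotheses are met since $\ell>-1$ forces $\ell>-3/2$ and $\ell\neq-1$) gives
$$
\rho_{\ell,\eta,k}\ge \rho_{\ell,\eta,1}>\beta_1>\eta+\sqrt{\eta^2+(\ell+1)^2}
$$
for every $k\ge 1$, which is the desired bound.

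The key step is therefore fixing the interlacing so that $\beta_1<\rho_{\ell,\eta,1}$. I would reuse the ratio
$$
Q(x)=\frac{x\phi_{\ell,\eta}'(x)+(\ell+1)\phi_{\ell,\eta}(x)}{x\phi_{\ell,\eta}(x)}
$$
from the proof of Theorem~\ref{thm:5.3}, which was shown there to be strictly decreasing on each subinterval cut off by its poles $\{0\}\cup\{\rho_{\ell,\eta,k}\}_{k\ge 1}$. Using $\phi_{\ell,\eta}(0)=1$ and $\phi_{\ell,\eta}'(0)=\eta/(\ell+1)$ from \eqref{series1}--\eqref{series2}, one finds $Q(x)=(\ell+1)/x+\eta/(\ell+1)+O(x)$ near $x=0^+$, so $Q(0^+)=+\infty$. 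At $x=\rho_{\ell,\eta,1}$ the denominator has a simple zero (by Theorem~\ref{thm:A}(ii)) while the numerator equals $\rho_{\ell,\eta,1}\,\phi_{\ell,\eta}'(\rho_{\ell,\eta,1})\neq 0$; hence $Q$ has a simple pole, and strict monotonicity forces $Q(x)\to-\infty$ as $x\to\rho_{\ell,\eta,1}^-$. The intermediate value theorem then produces a unique zero of $Q$ in $(0,\rho_{\ell,\eta,1})$, which must be $\beta_1$.

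The only conceptual obstacle is this last point. The interlacing in Theorem~\ref{thm:5.3}(i) alone admits two a priori patterns on $(0,\infty)$, and the bound from Lemma~\ref{lem:2.2} is useful only in the pattern $\beta_1<\rho_{\ell,\eta,1}$. The monotonicity of $Q$, together with its boundary values at $0^+$ and at the first pole, rules out the wrong pattern; everything else reduces to chaining inequalities, which is why the preceding remark flags the corollary as immediate.
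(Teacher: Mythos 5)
Your argument is correct and follows essentially the same route as the paper: both deduce from the interlacing of Theorem~\ref{thm:5.3} (applied with $H=\ell+1>0$) together with the observation that the origin is a zero of $x\phi_{\ell,\eta}$ but not of $x\phi_{\ell,\eta}'+(\ell+1)\phi_{\ell,\eta}$ that the first positive zero of $F_{\ell,\eta}'$ precedes that of $F_{\ell,\eta}$, and then invoke Lemma~\ref{lem:2.2}. Your explicit analysis of $Q$ near $0^{+}$ and near $\rho_{\ell,\eta,1}^{-}$ merely makes rigorous the pinning-down of the interlacing pattern that the paper leaves implicit.
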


\begin{proof}
Since $x\phi_{\ell,\eta}(x)$ has a zero at the origin but $x\phi_{\ell,\eta}'(x) +(\ell+1)\phi_{\ell,\eta}(x)$ is not, Theorem \ref{thm:5.3}, (ii) with \eqref{relation} implies that the smallest positive zero of $F_{\ell,\eta}'(x)$ is smaller than that of $F_{\ell,\eta}(x)$. Hence the conclusion is immediate from Lemma \ref{lem:2.2}.
\end{proof}

\section{Breaking down of separation theorem}

Unlike the scenario of $\phi_{\ell,\eta}(x)$ and $x \phi_{\ell+1,\eta}(x)$ in section 3,
the zero separation property between $\phi_{\ell,\eta}(x)$ and $x\phi_{\ell+2,\eta}(x)$ is no longer available when $\eta \ne 0$ and $\ell >-1$.
In order to examine more general property between $\phi_{\ell,\eta}(x)$ and $\phi_{\ell+2,\eta}(x)$, we first observe the common zeros of $\phi_{\ell,\eta}(x)$ and $ \phi_{\ell+2,\eta}(x)$. 

\begin{proposition}\label{prop:6.1}
Let $\eta \ne 0$ and $\ell > -1$. Then $\phi_{\ell,\eta}(x)$ and $\phi_{\ell+2,\eta}(x)$ can have at most one zero in common, occurring only at $x=-(\ell+1)(\ell+2)/\eta$ if it exists.
\end{proposition}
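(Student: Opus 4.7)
The plan is to exploit the three-term contiguous recurrence \eqref{recur2}, which relates $F_{\ell-1,\eta}$, $F_{\ell,\eta}$, and $F_{\ell+1,\eta}$. Since $\phi_{\ell,\eta}$ and $F_{\ell,\eta}$ agree up to the non-vanishing factor $C_{\ell,\eta}x^{\ell+1}$ on $\R\setminus\{0\}$, and since $\phi_{\ell,\eta}(0)=1\ne 0$, any common zero of $\phi_{\ell,\eta}$ and $\phi_{\ell+2,\eta}$ is automatically nonzero and is likewise a common zero of $F_{\ell,\eta}$ and $F_{\ell+2,\eta}$.

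First I would shift the index $\ell\mapsto \ell+1$ in \eqref{recur2} to produce a relation coupling $F_{\ell,\eta}$, $F_{\ell+1,\eta}$, and $F_{\ell+2,\eta}$:
\begin{equation*}
(\ell+2)\sqrt{(\ell+1)^2+\eta^2}\,F_{\ell,\eta}(x) - (2\ell+3)\left(\eta + \frac{(\ell+1)(\ell+2)}{x}\right) F_{\ell+1,\eta}(x) + (\ell+1)\sqrt{(\ell+2)^2+\eta^2}\,F_{\ell+2,\eta}(x) = 0.
\end{equation*}
If $\zeta\ne 0$ is a common zero of $\phi_{\ell,\eta}$ and $\phi_{\ell+2,\eta}$, then evaluating at $x=\zeta$ collapses the first and third terms, leaving
\begin{equation*}
(2\ell+3)\left(\eta+\frac{(\ell+1)(\ell+2)}{\zeta}\right)F_{\ell+1,\eta}(\zeta)=0.
\end{equation*}
Since $\ell>-1$ ensures $2\ell+3>1$, one of the two remaining factors must vanish: either $F_{\ell+1,\eta}(\zeta)=0$, or $\zeta=-(\ell+1)(\ell+2)/\eta$.

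Finally I would rule out the first alternative by invoking Lemma \ref{lem:3.1}, which states that $\varphi_{\ell,\eta}$ and $\varphi_{\ell+1,\eta}$ (equivalently $\phi_{\ell,\eta}$ and $\phi_{\ell+1,\eta}$, or $F_{\ell,\eta}$ and $F_{\ell+1,\eta}$) cannot share a nonzero zero. Consequently $\zeta$ must coincide with the single value $-(\ell+1)(\ell+2)/\eta$, which yields both the uniqueness and the location claim simultaneously. I do not anticipate a real obstacle here: the whole argument reduces to selecting the right contiguous relation, observing that the ``outer'' terms drop out at a common zero, and combining this with the previously-established fact that neighboring Coulomb wave functions have no shared nonzero zeros. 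The only mildly delicate point is ensuring the coefficient $(2\ell+3)$ is nonzero, which is immediate from $\ell>-1$.
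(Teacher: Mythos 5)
Your proposal is correct and follows essentially the same route as the paper: the paper rewrites the contiguous relation \eqref{recur2} (with the index shifted by one, expressed through $\phi_{\ell,\eta}$, $x\phi_{\ell+1,\eta}$, $x^2\phi_{\ell+2,\eta}$ and the factor $R_{1,\ell,\eta}(1/(2x))$), evaluates it at a putative common zero so that the outer terms drop out, and then uses the fact that consecutive Coulomb wave functions share no nonzero zeros to force the middle coefficient $\eta+(\ell+1)(\ell+2)/x$ to vanish. Your appeal to Lemma \ref{lem:3.1} for that last step is actually the cleaner citation (the paper points to Theorem \ref{thm:2.1}, which appears to be a mis-reference), and your remark that a common zero is automatically nonzero because $\phi_{\ell,\eta}(0)=1$ is a correct, harmless addition.
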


\begin{proof}
For $\eta\in \R$ and $\ell>-1$, we begin with writing \eqref{recur2} as
\begin{equation}\label{thm:recur}
\begin{multlined}[c][.9\displaywidth]
    \phi_{\ell,\eta}(x) - 2R_{1,\ell,\eta}\rb{\frac{1}{2x}} x \phi_{\ell+1,\eta}(x)\\
    + \frac{(\ell+2)^2+\eta^2}{(\ell+2)^2(2\ell+3)(2\ell+5)} x^2 \phi_{\ell+2,\eta}(x) =0,
\end{multlined}
\end{equation}
where $R_{1,\ell,\eta}(x)= x + \eta/[2(\ell+1)(\ell+2)]$, as presented in \eqref{list}. Let $\rho^*$ be the common zero of $\phi_{\ell,\eta}(x)$ and $\phi_{\ell+2,\eta}(x)$. In light of Theorem \ref{thm:2.1}, we find that $\phi_{\ell+1,\eta}(\rho^*)\ne 0$. Accordingly, by substituting $\rho^*$ into \eqref{thm:recur}, it is evident that $\rho^*$ is necessarily the zero of $R_{1,\ell,\eta}(1/(2x))$, that is, $\rho^* = -(\ell+1)(\ell+2)/\eta$. 
\end{proof}

\begin{remark}
Practically, the functions $\phi_{\ell,\eta}(x)$ and $\phi_{\ell+2,\eta}(x)$ could have the zero in common for appropriate choices of $\eta,\ell$.
In specific, the numerical simulation indicates that for $\eta = 1/3$, there exists $\ell^* \in (-0.102,-0.103)$ such that $\phi_{\ell^*,1/3}(\rho^*) = \phi_{\ell^* +2 ,1/3}(\rho^*)=0$, where $\rho^* = -(\ell+1)(\ell+2)/\eta$.
\end{remark}

We now establish the generalized interlacing property by supplementing the zero $\rho^*$ of $R_{1,\ell,\eta}$ to the set of zeros of $\phi_{\ell+2,\eta}$.

\begin{theorem}
Let $\eta \ne 0$ and $\ell > -1$, and define $\rho^* = -(\ell+1)(\ell+2)/\eta$.
\begin{enumerate}[label={\rm(\roman*)}]
    \item if $\rho^*\ne x_{\ell,\eta,k}$ for all $k\ge1$, then the zeros of $\phi_{\ell,\eta}(x)$ are interlaced with the zeros of $x (x-\rho^*) \phi_{\ell+2,\eta}(x)$.
    \item if $\rho^*= x_{\ell,\eta,k}$ for some $k\ge1$, then the zeros of $\phi_{\ell,\eta}(x)/(x-\rho^*)$ are interlaced with the zeros of $x \phi_{\ell+2,\eta}(x)$.
\end{enumerate}
\end{theorem}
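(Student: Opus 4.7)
The plan is to exploit the three-term recurrence \eqref{thm:recur}. Observing that $2xR_{1,\ell,\eta}(1/(2x))=\eta(x-\rho^*)/[(\ell+1)(\ell+2)]$, as already noted in the proof of Proposition \ref{prop:6.1}, the recurrence rewrites cleanly as
\[
\phi_{\ell,\eta}(x) = \frac{\eta(x-\rho^*)}{(\ell+1)(\ell+2)}\,\phi_{\ell+1,\eta}(x) - \frac{(\ell+2)^2+\eta^2}{(\ell+2)^2(2\ell+3)(2\ell+5)}\,x^2\phi_{\ell+2,\eta}(x).
\]
This identity is the bridge that transfers the already-established interlacing of $(\phi_{\ell,\eta},\phi_{\ell+1,\eta})$ provided by Theorem \ref{thm:3.1} and the reflection \eqref{reflect1} into interlacing information for the pair $(\phi_{\ell,\eta},\phi_{\ell+2,\eta})$.

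Evaluating the identity at an arbitrary nonzero zero $\alpha$ of $\phi_{\ell,\eta}$ yields $\alpha^2\phi_{\ell+2,\eta}(\alpha)=K(\alpha-\rho^*)\phi_{\ell+1,\eta}(\alpha)$ with the fixed constant $K:=\eta(\ell+2)(2\ell+3)(2\ell+5)/[(\ell+1)((\ell+2)^2+\eta^2)]$, and multiplying by $(\alpha-\rho^*)/\alpha$ gives $g(\alpha):=\alpha(\alpha-\rho^*)\phi_{\ell+2,\eta}(\alpha)=K(\alpha-\rho^*)^2\phi_{\ell+1,\eta}(\alpha)/\alpha$. For any two consecutive zeros $\alpha_1<\alpha_2$ of $\phi_{\ell,\eta}$ on the same half-line with $\alpha_i\ne\rho^*$, Theorem \ref{thm:3.1} forces $\phi_{\ell+1,\eta}(\alpha_1)\phi_{\ell+1,\eta}(\alpha_2)<0$; since $(\alpha_1-\rho^*)^2(\alpha_2-\rho^*)^2/(\alpha_1\alpha_2)>0$, one has $g(\alpha_1)g(\alpha_2)<0$, so $g$ has at least one zero in $(\alpha_1,\alpha_2)$. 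The reverse bracketing---every interval between consecutive zeros of $g$ contains at most one zero of $\phi_{\ell,\eta}$---is then automatic by pigeonhole: if $g_1<g_2$ were consecutive zeros of $g$ enclosing two zeros $\phi_1<\phi_2$ of $\phi_{\ell,\eta}$, the previous step would place a zero of $g$ in $(\phi_1,\phi_2)\subset(g_1,g_2)$, contradicting the consecutiveness of $g_1,g_2$. Because $0$ is itself a zero of $g$, these arguments operate within each half-line.

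The two cases of the statement correspond to whether the extra zero $\rho^*$ introduced by the factor $(x-\rho^*)$ collides with an actual zero of $\phi_{\ell,\eta}$. In case (i), $\rho^*\notin\{x_{\ell,\eta,k}\}$ and evaluating the recurrence at $\rho^*$ gives $\phi_{\ell,\eta}(\rho^*)=-C(\rho^*)^2\phi_{\ell+2,\eta}(\rho^*)$ with $C>0$, forcing $\phi_{\ell+2,\eta}(\rho^*)\ne0$, so $g$ carries a single simple zero at $\rho^*$; in case (ii), $\rho^*=x_{\ell,\eta,k}$ forces $\phi_{\ell+2,\eta}(\rho^*)=0$ by the same identity (consistent with Proposition \ref{prop:6.1}), and canceling the shared $(x-\rho^*)$ from $\phi_{\ell,\eta}$ reduces the analysis to the pair $(\phi_{\ell,\eta}(x)/(x-\rho^*),\,x\phi_{\ell+2,\eta}(x))$ to which the preceding steps apply verbatim. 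To upgrade the two bracketing bounds above to genuine interlacing---ruling out the scenario where $g$ has many more zeros than $\phi_{\ell,\eta}$---I would invoke the matching asymptotic zero density $N(T)\sim T/\pi$ valid for both $\phi_{\ell,\eta}$ and $\phi_{\ell+2,\eta}$, obtained from $F_{\ell,\eta}(x)\sim\sin\theta_{\ell,\eta}(x)$ recalled in Lemma \ref{lem:2.1}. The main obstacle is this last bookkeeping step at $\rho^*$, where one must verify that the extra zero supplied (case (i)) or the shared zero cancelled (case (ii)) leaves the global count balanced on each half-line so that the alternating bracketing truly closes; the negative half-line is finally handled by the reflection \eqref{reflect1}.
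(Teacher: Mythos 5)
Your reduction of the recurrence \eqref{thm:recur} to the pointwise identity $\alpha^2\phi_{\ell+2,\eta}(\alpha)=K(\alpha-\rho^*)\phi_{\ell+1,\eta}(\alpha)$ at the zeros $\alpha$ of $\phi_{\ell,\eta}$ is correct, and combined with Theorem \ref{thm:3.1} it does show that $g(x)=x(x-\rho^*)\phi_{\ell+2,\eta}(x)$ changes sign between consecutive zeros of $\phi_{\ell,\eta}$ on a fixed half-line. But this is a parity statement only: it yields an \emph{odd} number of zeros of $g$ in each such gap, hence at least one, and your pigeonhole step yields at most one zero of $\phi_{\ell,\eta}$ between consecutive zeros of $g$. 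These two bounds together do not give interlacing: they do not exclude a gap of $\phi_{\ell,\eta}$ containing three zeros of $g$, nor two consecutive zeros of $g$ with no zero of $\phi_{\ell,\eta}$ between them. The density repair you propose cannot close this: the zero-counting functions of $\phi_{\ell,\eta}$ and $\phi_{\ell+2,\eta}$ differ only by a bounded quantity (the phases $\theta_{\ell,\eta}$ and $\theta_{\ell+2,\eta}$ differ by $\pi+\sigma_{\ell,\eta}-\sigma_{\ell+2,\eta}$ up to $o(1)$), so an excess of two zeros of $g$ concentrated in a single gap is invisible to $N(T)\sim T/\pi$. You have correctly flagged this as the main obstacle, but it is not a bookkeeping issue at $\rho^*$; it is the substantive half of the proof, and it is left open.

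The paper closes exactly this gap with a Wronskian identity rather than a sign count: dividing \eqref{thm:recur} by $x^2R_{1,\ell,\eta}(1/(2x))\,\phi_{\ell+2,\eta}(x)$ and differentiating gives \eqref{thm:Wronskian}, which expresses $\mathcal{W}=W\qb{\phi_{\ell,\eta},\,x^2R_{1,\ell,\eta}(1/(2x))\phi_{\ell+2,\eta}}$ as a sum of two nonnegative terms: one proportional to $W\qb{\phi_{\ell+1,\eta},x\phi_{\ell+2,\eta}}$, already known to have constant sign by \eqref{Wrons1}, and one proportional to $\phi_{\ell,\eta}^2$. Hence the ratio $x(x-\rho^*)\phi_{\ell+2,\eta}(x)/\phi_{\ell,\eta}(x)$ is strictly monotone on each interval between consecutive simple poles, which forces exactly one zero per gap in both directions simultaneously; case (ii) is handled by absorbing the factor $R_{1,\ell,\eta}$ into the denominator. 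If you wish to keep your recurrence-based framework, the missing ingredient is precisely such a monotonicity statement (or an exact global zero count), not an asymptotic density estimate; note also that your case (ii) does not go through ``verbatim,'' since for consecutive zeros of $\phi_{\ell,\eta}(x)/(x-\rho^*)$ straddling $\rho^*$ the sign of $(\alpha_1-\rho^*)(\alpha_2-\rho^*)$ and the number of sign changes of $\phi_{\ell+1,\eta}$ both flip and must be tracked separately.
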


\begin{proof}
On dividing \eqref{thm:recur} by $x^2 R_{1,\ell,\eta}(1/(2x)) \phi_{\ell+2,\eta}(x)$ and differentiating, we obtain that
\begin{equation}\label{thm:Wronskian}
\begin{multlined}[c][.9\displaywidth]
    \mathcal{W}(x)
    = 2x^4 R_{1,\ell,\eta}^2\rb{\frac{1}{2x}} W\qb{ \phi_{\ell+1,\eta}(x) , x \phi_{\ell+2,\eta}(x) } \\
    + \frac{(\ell+2)^2+\eta^2}{2(\ell+2)^2(2\ell+3)(2\ell+5)}x^2 \phi_{\ell,\eta}^2(x),
\end{multlined}
\end{equation}
where
\begin{equation*}
    \mathcal{W}(x) = W\qb{ \phi_{\ell,\eta}(x) , x^2 R_{1,\ell,\eta}\rb{\frac{1}{2x}} \phi_{\ell+2,\eta}(x) }.
\end{equation*}
Suppose that $\phi_{\ell,\eta}$ and $\phi_{\ell+2,\eta}$ do not have a zero in common. Then obviously $\rho^* \ne x_{\ell,\eta,k}$ for all $k\ge1$ (If not, $\rho^*$ become necessarily the zero of $\phi_{\ell+2,\eta}$, by means of \eqref{thm:recur}).
From \eqref{Wrons1} and Proposition \ref{prop:6.1}, we find that $\mathcal{W}(x)$ is nonnegative on $\R$ and it vanishes only at $x=0$. In other words, the meromorphic function $x^2 R_{1,\ell,\eta}\rb{\frac{1}{2x}} \phi_{\ell+2,\eta}(x) / \phi_{\ell,\eta}(x)$ is increasing on each subintervals of $\R$ partitioned by the zeros of $\phi_{\ell,\eta}(x)$. Since the numerator and denominator have only simple zeros and $x^2 R_{1,\ell,\eta}\rb{1/(2x)} \phi_{\ell+2,\eta}(x)$ shares the zeros with $x(x-\rho^*)\phi_{\ell+2,\eta}(x)$, the zeros of this meromorphic function are interlaced with its poles, which establishes the first result.

In the remaining case when $\rho^* = x_{\ell,\eta,k}$ for some $k\ge1$, we consider $\Phi(x) = \phi_{\ell,\eta}(x)/ R_{1,\ell,\eta}(x)$, which does not vanish at $x=\rho^*$. Accordingly, it can be deduced from \eqref{thm:Wronskian} that $R_{1,\ell,\eta}^{-2}(x)\mathcal{W}(x)$ is nonnegative on $\R$ and it vanishes only at $x=0$. Moreover, we have
\begin{equation*}
    \frac{d}{dx} \frac{x\phi_{\ell+2,\eta}(x)}{\Phi(x)} = \frac{R_{1,\ell,\eta}^{-2}(x)\mathcal{W}(x)}{\Phi^2(x)}.
\end{equation*}
Applying the same argument, the second statement follows as $x\phi_{\ell+2,\eta}(x)/ \Phi(x)$ is increasing on on each subintervals of $\R$ partitioned by the zeros of $\Phi_{\ell,\eta}(x)$.
\end{proof}

As shown in Figure \ref{fig:6.1}, the zeros of $\phi_{\ell,\eta}(x)$ are interlaced with the zeros of $x(x-\rho^*)\phi_{\ell_2,\eta}(x)$ since none of the zeros in $\phi_{\ell,\eta}(x)$ coincide with $\rho^*$.

\begin{figure}[H]
    \centering
    \includegraphics[width=0.495\textwidth, height=0.412\textwidth]{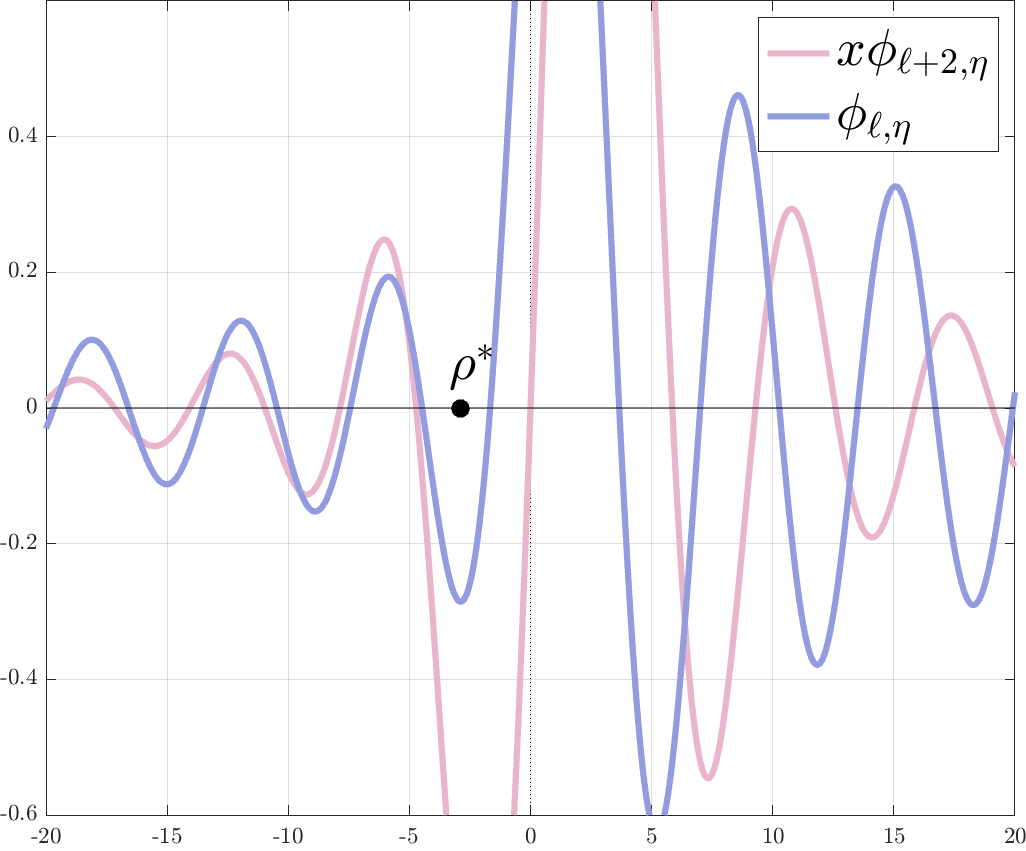}
    \caption{The graphs of $\phi_{\ell,\eta}$ and $x\phi_{\ell+2,\eta}$ when $\eta = 1/2$ and $\ell = -2/5$. The plotted dot denotes the zero of $R_{1,\ell,\eta}(1/(2x))$, i.e., $\rho^* = -(\ell+1)(\ell+2)/\eta$.}
    \label{fig:6.1}
\end{figure}

\bigskip \noindent
{\bf Acknowledgements.}
The author would like to thank Mourad E. H. Ismail for drawing the author's attention to Wimp's work and for many stimulating conversations. 

\printbibliography

\end{document}